\newtheorem{prop}{Proposition}[section]
\newtheorem{theo}[prop]{Theorem}
\newtheorem{cor}[prop]{Corollary}
\newtheorem{rem}[prop]{Remark}
\numberwithin{equation}{section}
\newcommand{\R}{\mathbb R}
\date{}
\begin{document}
\title{On the geometry of folded cuspidal edges}
\author{Ra\'{u}l Oset Sinha\footnote{Partially supported by DGICYT Grant MTM2015--64013--P.}\, and\, Kentaro Saji
\footnote{Supported by JSPS KAKENHI Grant Number JP26400087.}}

\maketitle
\begin{abstract}
We study the geometry of cuspidal $S_k$ singularities in
$\mathbb R^3$ obtained by folding generically a cuspidal edge. In
particular we study the geometry of the cuspidal cross-cap $M$, i.e.
the cuspidal $S_0$ singularity. We study geometrical invariants
associated to $M$ and show that they determine it up to order 5. We
then study the flat geometry (contact with planes) of a generic
cuspidal cross-cap by classifying submersions which preserve it and
relate the singularities of the resulting height functions with the
geometric invariants.
\end{abstract}

\renewcommand{\thefootnote}{\fnsymbol{footnote}}
\footnote[0]{2010 Mathematics Subject classification 57R45, 53A05.}
\footnote[0]{Key Words and Phrases. Cuspidal cross-cap, Folded umbrella, Cuspidal edge, Geometric invariants, Height functions, Singularities.}

\section{Introduction}\label{sec:intro}
Given a parametrisation $\phi:U\subset\mathbb R^2\rightarrow \mathbb
R^3$ of a surface $N$, where $U$ is an open set, we say that
$N$ is a cuspidal $S_k$ singularity if it admits a
parametrisation $\phi$ which is $\mathcal A$-equivalent (equivalent
by diffeomorphisms in source and target) to
$f(x,y)=(x,y^2,x^{k+1}y^3+y^5)$. In the particular case of $k=0$,
the cuspidal $S_0$-singularity is the cuspidal cross-cap (or folded
umbrella), which we denote by $M$. In this case, the image of $f$
resembles that of the Whitney umbrella but it contains a cuspidal
edge transversal to the double point curve (see Figure \ref{ccc}).
Cuspidal $S_k$ singularities are types of frontal singularities and
the cuspidal cross-cap in particular naturally appears in different
contexts of differential geometry. For example, it is a generic
singularity of bicaustics, the surface drawn by the cuspidal edges
of a 1-parameter family of caustics in 3-space \cite{arnold1}. It is
also the singularity which appears in a developable surface of a
space curve at a point of zero torsion \cite{cleave}. It even
appears as a type of generic singularity in dynamical systems
related to relaxational equations (\cite{davydov}, \cite{zak}).

\begin{figure}
\begin{center}
\includegraphics[width=0.5\linewidth]{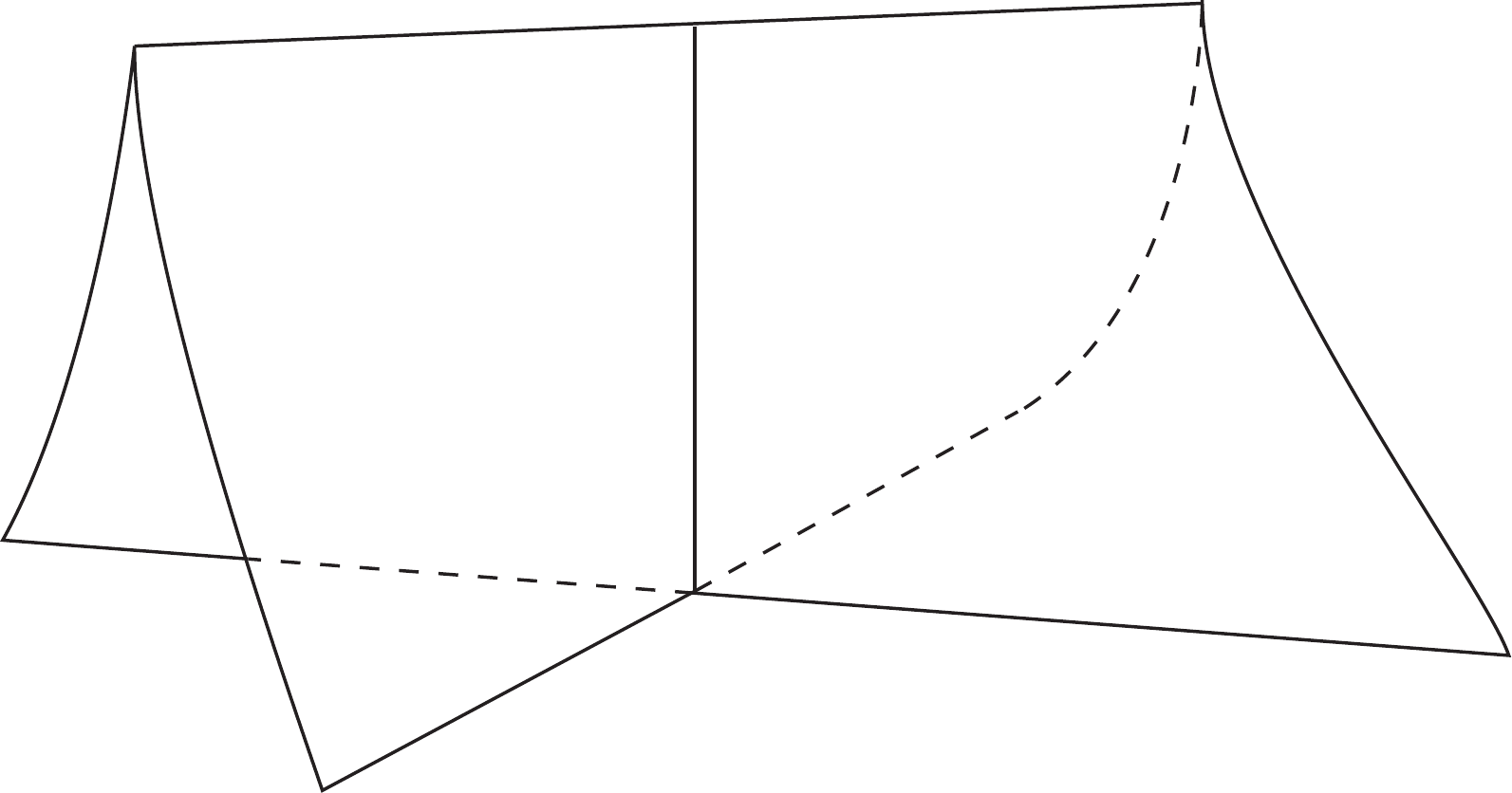}
\caption{The standard cuspidal cross-cap.}
\label{ccc}
\end{center}
\end{figure}

There has been a recent impulse in the study of the differential geometry of singular surfaces. Defining new geometric invariants
(namely, invariants of surfaces under the action of $SO(3)$ in $\R^3$)
and applying singularity theory techniques has become a crucial subject to attain this goal.
For instance, there have been many developments considering the geometry of the cross-cap (Whitney umbrella) (\cite{barajaskabata,brucewest,diastari,fukuihasegawa,hasegawahondaetal1,hasegawahondaetal2,hnuy,nunotari,oliver,osettari1}). Considering the most simple type of wave front singularity, the cuspidal edge, there have also been many significant advances (\cite{kokubuetal,martinssaji1,martinssaji2,naokawaetal,osettari2,sajietalannals,teramoto}). Many papers are devoted to the study of singularities of wave fronts or frontals in general (\cite{msuy,sajietalmathproc,sajietalgeomanal}). Also, in \cite{martinsnuno}, the geometry of corank 1 surface singularities is studied in general.

In order to study the generic geometry of a certain singular surface
it is convenient to have a normal form with arbitrary coefficients
which give the necessary freedom to capture all the generic
geometry. Such a normal form must be obtained by applying
diffeomorphisms in the source, but only isometries in the target. In
\cite{west}, a normal form was obtained for the cross-cap. In
\cite{martinssaji1}, Martins and the second author obtain a normal
form for the cuspidal edge.
In the case of the cross-cap the authors consider the normal form
modulo order 3 terms, and in the cuspidal edge case the normal form
is considered up to order 3 terms. However, for more degenerate
singularities the order one must consider in order to capture
interesting geometrical features can grow considerably. For
example, taking a generic section of the cuspidal cross-cap $M$
through the origin yields a ramphoid (or (2,5)-)cusp, therefore, it
is reasonable to think that a normal form must consider up to order
5 terms. We avoid this difficulty by taking an alternative approach:
given a generic cuspidal edge, if we take a plane transversal to the
cuspidal edge curve and fold the cuspidal edge along that plane we
obtain a cuspidal cross-cap. This was first noticed by Arnol'd in
\cite[page 120]{arnold} and generalized by the second author in
\cite{sajicuspidal}. If the order of contact of the cuspidal
edge with the folding plane is $k+1$, the resulting surface has an
$S_k$ singularity.

The idea of folding maps (more precisely, studying geometry by
considering its symmetries) dates back to Klein's Erlangen
programme. In \cite{brucewilkinson} Bruce and Wilkinson studied this
phenomenon from the singularity theory point of view and it is
starting to be of interest for geometers in singularity theory again
(\cite{barajas,izumiyaetalosaka}). More recently,
Pe\~nafort-Sanchis has studied (generalized) reflection maps as a source of corank 2 maps and has proved
L\^e's conjecture about the injectivity of corank 2 maps for this
class of maps (\cite{penafort}). In our case, considering the
cuspidal cross-cap obtained by folding a cuspidal edge is obviously
more restrictive than studying a generic cuspidal cross-cap. We
shall show that from the point of view of flat geometry, there is only one generic singularity of the height function which is not
captured by this process. Namely, the reflecting plane is the
tangent cone of the resulting cuspidal cross-cap and all of the
surface is on one side of this plane, but generically the surface
could be on both sides of the tangent cone, as we shall show.
This suggests that the set of germs of surfaces with a cuspidal cross-cap obtained by folding a cuspidal edge has (in a certain sense) codimension 1 in the set of germs of surfaces with a cuspidal cross-cap singularity.
However, there are more advantages than disadvantages (besides the
fact of not needing a special normal form) since we can relate the
geometry of the cuspidal cross-cap with that of the cuspidal edge it
comes from. In fact, we find a relation between a generic
singularity of a height function on a generic cuspidal cross-cap and
the torsion of a certain curve in the cuspidal edge before folding
it.

The paper is organized as follows: Section 2 explains the setting and studies geometric invariants of the cuspidal cross-cap, and cuspidal $S_k$ singularities obtained by folding a cuspidal edge. We consider geometric invariants of the cuspidal edge in the cuspidal cross-cap, the double point curve and of the ramphoid cusp obtained by a generic section through the origin. Some relations are given amongst these invariants and it is shown which of these invariants determine the cuspidal cross-cap up to order 5. Section 3 is devoted to the classification of submersions preserving $M$. The singularities of these submersions model the singularities of the height functions on $M$ and capture the geometry of the contact of $M$ with planes. These singularities are related to the geometric invariants considered in Section 2. We then study the duals of the different generic $M$. Finally, in Section 4, we consider the geometry of the tangent developable of a space curve at a point of zero torsion.

\emph{Acknowledgements:} The authors would like to thank Farid Tari
for helpful discussions and the referees for valuable suggestions which improved the scope and presentation of the results.
\section{Geometrical invariants}

\subsection{Normal form of the cuspidal edge and its invariants}
A map-germ
$f:(\mathbb R^2,0)\rightarrow (\mathbb R^3,0)$
is a {\it frontal\/} if
there exists a well defined normal unit vector field $\nu$ along $f$,
namely, $|\nu|=1$ and for any $X\in T_p\R^2$, $df_p(X)\cdot\nu(p)=0$.
A frontal $f$ with a unit normal unit vector field $\nu$
is a {\it front} if the pair $(f,\nu)$ is an immersion.
Since at a cuspidal edge $f:(\mathbb R^2,0)\rightarrow (\mathbb R^3,0)$,
there is always a well defined normal unit vector field $\nu$ along $f$,
and the pair $(f,\nu)$ is an immersion, a cuspidal edge is a front.
On the other hand,
at a cuspidal cross-cap $f:(\mathbb R^2,0)\rightarrow (\mathbb R^3,0)$,
there is always a well defined normal unit vector field $\nu$ along $f$,
but the pair $(f,\nu)$ is not an immersion,
a cuspidal cross-cap is a frontal but not a front.
Let $f:(\mathbb R^2,0)\rightarrow (\mathbb R^3,0)$ be a frontal
with a normal unit vector field $\nu$.
Consider the function $\lambda=\det(f_x,f_y,\nu)$, where $(x,y)$ are the coordinates of $\mathbb R^2$.
Then $S(f)=\{\lambda^{-1}(0)\}$, where $S(f)$ is the set of singular
point of $f$.
A singular point $q$ is non-degenerate if $d\lambda(q)\neq 0$.
If $q$ is a non-degenerate singular point,
there is a well defined vector field $\eta$ in $\mathbb R^2$,
such that $df(\eta)=0$ on $S(f)$.
Such a vector field is called a {\it null vector field}.
A singular point $q$ is called of {\it first kind\/}
if $\eta\lambda(q)\ne0$.
A singular point $q$ is of first kind of a front if $f$
is a cuspidal edge (\cite{kokubuetal}).
Let $f:(\mathbb R^2,0)\rightarrow (\mathbb R^3,0)$ be a frontal
with a normal unit vector field $\nu$, and
$0$ a singular point of the first kind.
Since $\eta$ is transversal to $S(f)$, we can consider another vector field $\xi$ which is tangent to $S(f)$ and such that $(\xi,\eta)$ is positively oriented. Such a pair of vector fields is called an adapted pair. An adapted coordinate system $(u,v)$ of $\mathbb R^2$ is a coordinate system such that $S(f)$ is the $u$-axis, $\partial_v$ is the null vector field and there are no singular points besides the $u$-axis. Let $\gamma$ be a parametrisation of the singular curve $S(f)$ and let $\widehat\gamma=f\circ\gamma$.

In \cite{martinssaji1} certain geometric invariants of cuspidal edges are studied. Amongst them are the singular curvature, the limiting normal curvature, the cuspidal curvature and the cusp-directional torsion ($\kappa_s$, $\kappa_{\nu}$, $\kappa_c$ and $\kappa_t$, resp.), and these are given as follows:
\begin{equation}\label{eq:invdef1}
\kappa_s(t)=
\displaystyle
\operatorname{sgn}(d\lambda(\eta))\frac
{\det(\widehat\gamma'(t),\widehat\gamma''(t),\nu(\gamma(t)))}
{|\widehat\gamma'(t)|^3},\quad
\kappa_{\nu}(t)=
\displaystyle
\frac{\langle \widehat\gamma''(t),\nu(\gamma(t))\rangle}
{|\widehat\gamma'(t)|^2},
\end{equation}
and
\begin{align}
\label{eq:invdef2}
\kappa_c(t)&=
\displaystyle
\frac{|\xi f|^{3/2}\det(\xi f,\eta^2 f,\eta^3 f)}
{|\xi f\times\eta^2 f|^{5/2}}(\gamma(t)),\displaybreak[3]\\
\label{eq:invdef3}
\kappa_t(t)&=
\displaystyle
\frac{\det(\xi f,\eta^2 f,\xi\eta^2 f)}
{|\xi f\times\eta^2 f|^2}(\gamma(t))
-
\frac{\det(\xi f,\eta^2 f,\xi\eta^2 f)\langle \xi f,\eta^2 f \rangle}
{|\xi f|^2|\xi f\times\eta^2 f|^2}(\gamma(t)),
\end{align}
where $'$ stands for the differential with respect to the considered variable,
and $\zeta^i f$ stands for the $i$ times directional derivative
of $f$ by the vector field $\zeta$.
A detailed description and geometrical interpretation of $\kappa_s$ and $\kappa_{\nu}$ can be found in \cite{sajietalannals}, of $\kappa_c$ in \cite{msuy} and of $\kappa_t$ in \cite{martinssaji1}.
Since $\xi f\times\eta^2 f\ne0$ for singularities of the first kind,
these invariants can be also defined for the singularities of the first kind.

A normal form for a cuspidal edge is obtained
in \cite[Theorem 3.1]{martinssaji1}.
The same proof works for the case of a singular point
of the first kind, and we obtain:
\begin{prop}
Let $f:(\mathbb R^2,0)\rightarrow (\mathbb R^3,0)$ be a frontal
with a normal unit vector field $\nu$.
Let $0$ be a singular point of the first kind.
Then there exist a coordinate system $(x,y)$ on $(\R^2,0)$
and an isometry-germ\/ $\Phi:(\R^3,0)\to(\R^3, 0)$
such that
\begin{equation}
\label{eq:normal}
\Phi\circ f(x,y)
=
\left(
x,a(x)+\dfrac{y^2}{2},
b_0(x)+b_1(x)y^2+b_2(x)y^3+b_3(x,y)y^4\right),
\end{equation}
where
$a,b_0,b_1,b_2,b_3$ be smooth functions
such that
$
a(0)=a'(0)=b_0(0)=b_0'(0)=b_1(0)=0.
$
\end{prop}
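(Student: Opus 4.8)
My plan is to follow the scheme of the proof of \cite[Theorem~3.1]{martinssaji1}, the point being to check that the single hypothesis ``$0$ is a singular point of the first kind'' supplies everything that the cuspidal-edge hypothesis supplied there. I would begin with an adapted coordinate system $(u,v)$ near $0$, so that $S(f)=\{v=0\}$, $\partial_v$ is the null vector field, and hence $f_v\equiv 0$ on $\{v=0\}$. Three elementary observations will set the stage: (i) at a non-degenerate singular point $df_0$ has rank exactly one with kernel $\langle\partial_v\rangle$, so $f_u(0)\neq 0$; (ii) differentiating $\lambda=\det(f_u,f_v,\nu)$ along $\eta=\partial_v$ and restricting to $\{v=0\}$ kills the two terms still containing $f_v$, leaving $\eta\lambda=\det(f_u,f_{vv},\nu)$, so the first-kind condition $\eta\lambda(0)\neq 0$ is equivalent to $f_u(0)$ and $f_{vv}(0)$ being linearly independent; (iii) differentiating $\langle\nu,f_v\rangle\equiv 0$ in $v$ and restricting to $\{v=0\}$ gives $\langle\nu(0),f_{vv}(0)\rangle=0$, so $\nu(0)\perp\operatorname{span}(f_u(0),f_{vv}(0))$.

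Next I would compose $f$ with a rotation so that $\nu(0)=e_3$ and $f_u(0)$ is a positive multiple of $e_1$; then $f_{vv}(0)$ lies in the $e_1e_2$-plane with nonzero $e_2$-component, and after a further rotation by $\pi$ about the $e_1$-axis (which merely replaces $\nu$ by $-\nu$) I may assume this component is positive, so all rotations used lie in $SO(3)$. Writing the rotated map as $(f^1,f^2,f^3)$, observation (i) gives $f^1_u(0)\neq 0$, so $(u,v)\mapsto(f^1(u,v),v)$ is a diffeomorphism-germ; taking it as new source coordinates $(x,y)$ makes $f^1\equiv x$, keeps $\{y=0\}$ the singular set and $\partial_y$ in the null direction there (because $f^1_v$ vanishes on $\{v=0\}$), and hence keeps $f^2_y,f^3_y$ vanishing on $\{y=0\}$. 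From $f_x(0)=e_1$ one reads $f^2_x(0)=f^3_x(0)=0$, while (ii) and (iii) give $f^2_{yy}(0)>0$ and $f^3_{yy}(0)=0$.

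Finally I would normalise the last two components. Since $f^2_y(x,0)=0$, Taylor's formula lets me write $f^2(x,y)=f^2(x,0)+\tfrac{y^2}{2}h(x,y)$ with $h$ smooth and $h(0)=f^2_{yy}(0)>0$, so $\widetilde y:=y\sqrt{h(x,y)}$ is a legitimate change of coordinates fixing $x$; after renaming $\widetilde y$ as $y$ the second component becomes $a(x)+\tfrac{y^2}{2}$ with $a(x)=f^2(x,0)$, while $f^3_y$ still vanishes on $\{y=0\}$. Taylor-expanding the third component in $y$ then gives $b_0(x)+b_1(x)y^2+b_2(x)y^3+b_3(x,y)y^4$ with no linear term, where $b_0(x)=f^3(x,0)$, $b_1(x)=\tfrac12 f^3_{yy}(x,0)$, $b_2(x)=\tfrac16 f^3_{yyy}(x,0)$, and $b_3$ smooth. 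The relations $a(0)=b_0(0)=0$ follow from $f(0)=0$, $a'(0)=b_0'(0)=0$ from $f_x(0)=e_1$, and $b_1(0)=0$ from $\langle\nu(0),f_{yy}(0)\rangle=0$. I do not expect a genuine obstacle: every move is of a standard type, and the only place the nature of the singularity enters is the linear independence of $f_u(0)$ and $f_{vv}(0)$ — equivalently $f^2_{yy}(0)\neq 0$ after the rotation — which is exactly what makes the rescaling $\widetilde y=y\sqrt{h}$ possible; the remaining verifications coincide with those in \cite[Theorem~3.1]{martinssaji1}.
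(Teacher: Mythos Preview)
Your proposal is correct and follows essentially the same approach as the paper: the paper gives no detailed argument but simply remarks that the proof of \cite[Theorem~3.1]{martinssaji1} carries over verbatim once one observes that the first-kind hypothesis is all that is used, and you have spelled out precisely this verification. Your observations (i)--(iii) isolate exactly the two consequences of $\eta\lambda(0)\neq0$ that drive the argument --- linear independence of $f_u(0)$ and $f_{vv}(0)$, and $\nu(0)\perp f_{vv}(0)$ --- and the remaining coordinate changes proceed as in the cuspidal-edge case.
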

Let $f$ be a map-germ given by \eqref{eq:normal}.
If $b_2(0)\ne0$, then $f$ is a cuspidal edge,
and
if $b_2(0)=0$, $b_2'(0)\ne0$, then $f$ is a cuspidal cross-cap.
Moreover, if $b_2^{(i)}(0)=0$ $(i=1,\ldots,k)$ and
$b_2^{(k+1)}(0)\ne0$, then $f$ is $\mathcal A$-equivalent to
$(x,y^2,x^{k+1}y^3\pm y^5)$
(i.e. a {\it cuspidal\/ $S_k^\pm$ singularity},
and a cuspidal $S_0$ singularity is a cuspidal cross-cap).
Furthermore, we have:
$$
|\kappa_s(0)|=|a''(0)|,\quad
\kappa_\nu(0)=b_0''(0),\quad
\kappa_c(0)=6b_2(0),\quad
\kappa_t(0)=2b_1'(0).
$$
If $b_2(0)=0$, (namely, non-cuspidal edge), one can define other invariants.
Let $f$ be a map-germ given by \eqref{eq:normal}
with $b_2(0)=0$.
Then one can take a null vector satisfying
$
\xi f\cdot\tilde\eta^2 f(0)=
\xi f\cdot\tilde\eta^3 f(0)=0.
$
Then there exist $l$ such that
$\tilde\eta^3 f(0)=
l\tilde\eta^2 f(0)$.
Following \cite{hs},
we define two real numbers by
\begin{align}
\label{eq:bias}
B=&
\dfrac{|\xi f|^2\det\Big(\xi f,\
\tilde\eta^2 f,\ \tilde\eta^4f\Big)}
{|\xi f\times \tilde\eta^2 f|^{3}}\Bigg|_{(u,v)=0},\\
\label{eq:kcr}
\kappa_c^{{\rm r}}=&
\dfrac{|\xi f|^{5/2}\det\Big(
\xi f,\ \tilde\eta^2 f,\
3\tilde\eta^5 f-10\,l\,\tilde\eta^4 f
\Big)}
{|\xi f\times \tilde\eta^2 f|^{7/2}}\Bigg|_{(u,v)=0}.
\end{align}
$B$ and $\kappa_c^{{\rm r}}$
do not depend on the choice of $(\xi,\tilde\eta)$.
The invariant $B$ measures the bias of a curve around the singular point
and it is called {\it bias},
and
$\kappa_c^{{\rm r}}$ measures wideness of the cusp
and it is called {\it secondary cuspidal curvature}.
See \cite{hs} for details.

\subsection{Invariants of singular space curve}\label{sec:spcurv}
In order to study special curves on the
cuspidal $S_k^\pm$ singularities,
we consider geometric invariants of singular
space curves.
Let $\gamma:(\mathbb R,0)\to(\mathbb R^3,0)$ be a curve and assume that
$\gamma'(0)=(0,0,0)$. The point $0$ is called an {\it $A$-type}\/ point if
$\gamma''(0)\ne(0,0,0)$, and $0$ is called $(2,3)$-type if
$\gamma''(0)\times\gamma'''(0)\ne(0,0,0)$.

Let $0$ be an $A$-type singular
point of $\gamma$, then, following \cite{martinssaji2},
we define
$$
\kappa_{sing}(\gamma)
=
\dfrac{|\gamma''(0)\times \gamma'''(0)|}
{|\gamma''(0)|^{5/2}}.
$$
Moreover, let $0$ be a $(2,3)$-type singular point of $\gamma$,
then we define
$$
\tau_{sing}(\gamma)
=
\dfrac{\sqrt{|\gamma''(0)|}\det(\gamma''(0),\gamma'''(0),\gamma^{(4)}(0))}
{|\gamma''(0)\times \gamma'''(0)|^2},
$$
where $f^{(i)}$ stands for the $i$-th derivative of $f$
with respect to the considered variable.
We set
$$
\sigma_{sing}(\gamma)=
\dfrac{
\left(
\langle\gamma''(0)\times\gamma'''(0),\gamma''(0)\times\gamma^{(4)}(0)\rangle
-2\dfrac{|\gamma''(0)\times\gamma'''(0)|^2\langle\gamma''(0),\gamma'''(0)\rangle}
{\langle\gamma''(0),\gamma''(0)\rangle}\right)}
{\langle\gamma''(0),\gamma''(0)\rangle^{11/4}}.
$$
If two curves
$\gamma_1,\gamma_2:(\mathbb R,0)\to(\mathbb R^3,0)$
satisfy
$\kappa_{sing}(\gamma_1)=\kappa_{sing}(\gamma_2)$,
$\tau_{sing}(\gamma_1)=\tau_{sing}(\gamma_2)$ and
$\sigma_{sing}(\gamma_1)=\sigma_{sing}(\gamma_2)$,
then there exists an isometry and parameters $t_1$, $t_2$
such that
$j^3\gamma_1(0)=j^3\gamma_2(0)$
with respect to the parameters
$t_1$, $t_2$.
Thus the invariants
$\{\kappa_{sing}(\gamma),\sigma_{sing}(\gamma),\tau_{sing}(\gamma)\}$
can be used as invariants
for $(2,3)$-type singular
space curves up to fourth degree.
See \cite{martinssaji2} for details.

\subsection{Folded cuspidal edge}
Let $f:(\R^2,0)\to(\R^3,0)$ be the map-germ defined
by the right-hand side of \eqref{eq:normal}.
Let $(X,Y,Z)$ denote the coordinate system
on $(\R^3,0)$.
We now consider the fold map whose singular set is
the reflecting hyperplane $Z=0$, given by
$$
g(X,Y,Z)=(X,Y,Z^2)
$$
and consider the cuspidal edge folded along the plane $Z=0$,
i.e., the composition
\begin{equation}\label{eq:folding}
\phi=g\circ f
\end{equation}
of the parametrisation of the
cuspidal edge with the fold map.
We may assume that
$a(0)=a'(0)=b_0(0)=0,\ b_2(0)\ne0$
without loss of generality.
In order for $\phi=g\circ f$ to be a cuspidal $S_k$ singularity,
we need the condition
$$
b_0(0)=\cdots=b_0^{(k)}(0)=0,b_0^{(k+1)}(0)\ne0,
$$
(see \cite[Theorem 3.2]{sajicuspidal}) which is equivalent to the
cuspidal edge curve having a $(k+1)$th degree contact with the
plane $Z=0$ (see Figure \ref{folded}). By Shafarevich \cite{Sha}, if
${\mathcal X}$ is an irreducible affine variety in $\mathbb R^n$
defined by the ideal $I$ then the equations for the tangent cone of
${\mathcal X}$ are the lowest degree terms of the polynomials in
$I$. Therefore, the tangent cone of the cuspidal $S_k$ singularity
is the plane $Z=0$.

\begin{figure}
\begin{center}
\includegraphics[width=0.4\linewidth]{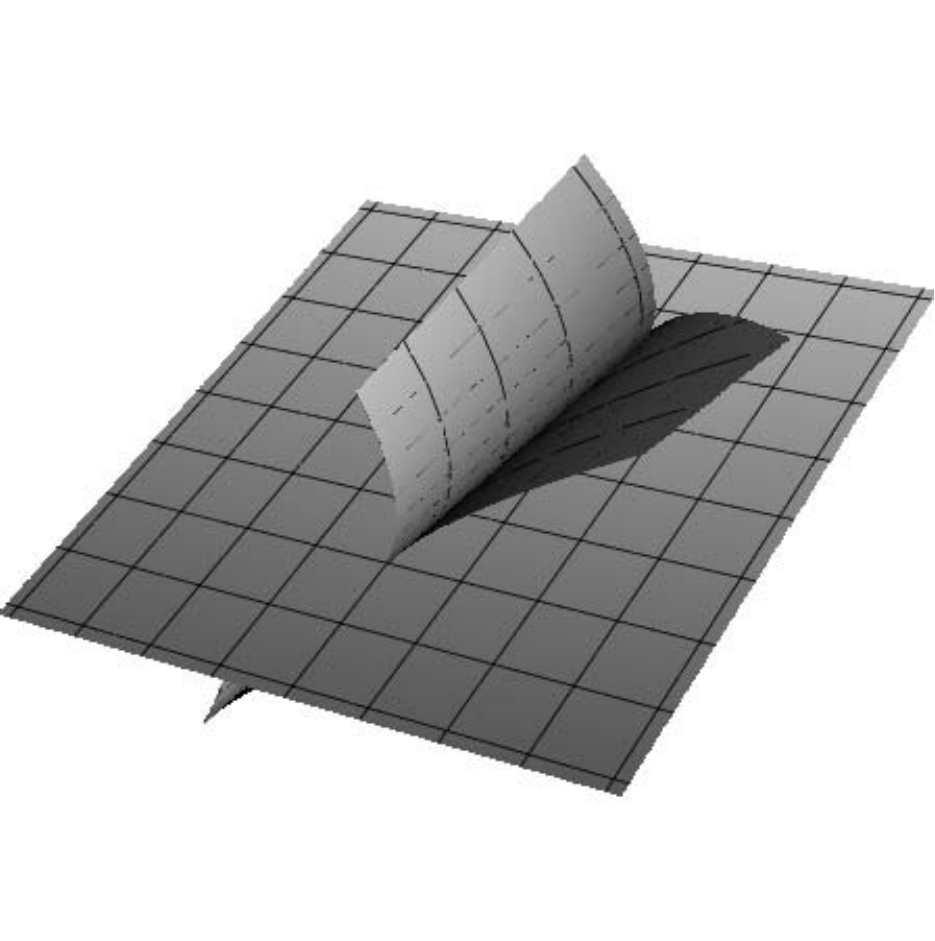}
\hspace{2mm}
\raisebox{0.15\linewidth}[1ex][1ex]{$\longrightarrow$}
\hspace{5mm}
\includegraphics[width=0.35\linewidth]{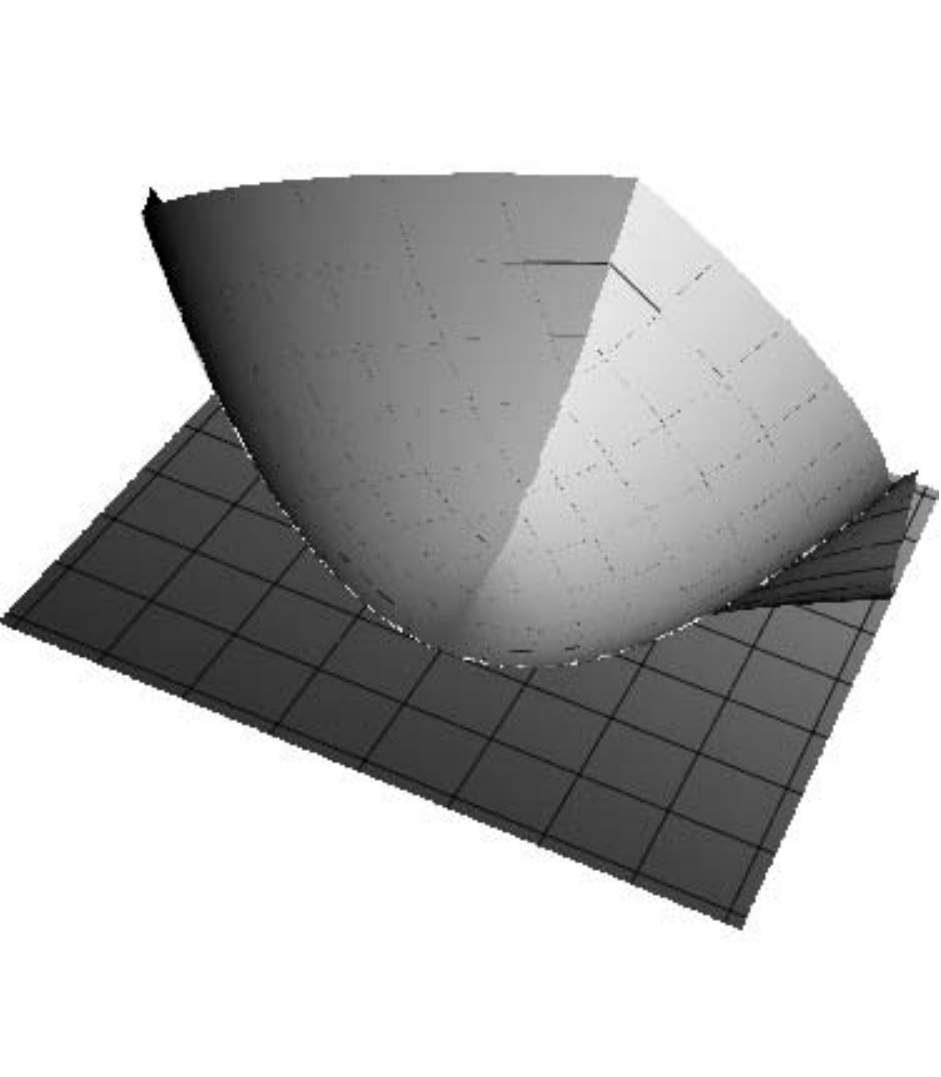}
\caption{A cuspidal cross-cap obtained by folding a cuspidal edge.}
\label{folded}
\end{center}
\end{figure}

We now compute the geometric invariants of
$\phi$.
Since $\phi$ is a cuspidal $S_k$ singularity,
in particular, a singularity of the first kind, we have
the invariants \eqref{eq:invdef1}, \eqref{eq:invdef2}
and \eqref{eq:invdef3}.
The singular set of $\phi$ is given by $S(\phi)=\{y=0\}$.
We set $\Sigma=\phi(S(\phi))$.
Then $\Sigma$ is the image of
$\phi(x,0)=(x,a(x),b_0(x)^2)$.
The osculating plane of $\Sigma$ at
the origin is the plane orthogonal to the vector
$(0,-2b_0'(0)^2,a''(0))$.

\begin{prop}\label{cuspinv}
For a cuspidal $S_k$ singularity $\phi$ obtained by \eqref{eq:folding},
we have
\begin{align*}
\kappa_s^\phi(0)&=a''(0),\\
(\kappa_s^\phi)'(0)&=8 b_1(0) b_0'(0)^3+a'''(0),\\
(\kappa_s^\phi)''(0)&=16b_0'(0)^3b_1'(0)
-16b_0'(0)^4a''(0)-3a''(0)^3+a^{(4)}(0)\\
&-8b_1(0)b_0'(0)^2(2b_1(0)a''(0)-7b_0''(0)),\displaybreak[3]\\
\kappa_\nu^\phi(0)&=2 b_0'(0)^2,\\
(\kappa_\nu^\phi)'(0)&=2 b_0'(0) (-2 b_1(0) a''(0)+3 b_0''(0)),\\
(\kappa_\nu^\phi)''(0)&=-32b_1(0)^2b_0'(0)^4
-24b_0'(0)^6-4b_0'(0)^2a''(0)^2+6b_0''(0)^2\\
&-4b_1(0)(a''(0)b_0''(0)+2b_0'(0)a'''(0))
+b_0'(0)(-8b_1'(0)a''(0)+8b_0'''(0)),\displaybreak[3]\\
\kappa_t^\phi(0)&=4 b_1(0) b_0'(0),\\
(\kappa_t^\phi)'(0)&=-2 b_0'(0)^2 a''(0)
+8 b_0'(0) b_1'(0)+4 b_1(0) b_0''(0),\\
(\kappa_t^\phi)''(0)&=-2(64b_1(0)^3b_0'(0)^3-6b_1'(0)b_0''(0)
\\&+b_0'(0)(6a''(0)b_0''(0)-6b_1''(0)+b_0'(0)a'''(0))\\
&+b_1(0)(32b_0'(0)^5-4b_0'(0)a''(0)^2-2b_0'''(0))),\displaybreak[3]\\
\kappa_c^\phi(0)&=0,\\
(\kappa_c^\phi)'(0)&=12 b_2(0) b_0'(0),\\
(\kappa_c^\phi)''(0)&=12(2b_0'(0)b_2'(0)+b_2(0)b_0''(0)).
\end{align*}
\end{prop}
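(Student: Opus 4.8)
The plan is to compute all twelve quantities by brute force from the explicit parametrisation $\phi=g\circ f$, using the formulas \eqref{eq:invdef1}, \eqref{eq:invdef2}, \eqref{eq:invdef3} for the invariants together with the recipe already recorded in the excerpt: for a cuspidal $S_k$ singularity coming from \eqref{eq:folding} the singular set is $S(\phi)=\{y=0\}$, the coordinates $(x,y)$ are already adapted, $\eta=\partial_y$ is the null vector field and we may take $\xi=\partial_x$. Thus the pair $(\xi,\eta)=(\partial_x,\partial_y)$ is an adapted pair, $\gamma(x)=(x,0)$ parametrises $S(\phi)$, and $\widehat\gamma=\phi\circ\gamma$. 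This removes all ambiguity and reduces everything to differentiation of a concrete vector-valued function. First I would write out $\phi(x,y)=\bigl(x,\,a(x)+y^2/2,\,(b_0(x)+b_1(x)y^2+b_2(x)y^3+b_3(x,y)y^4)^2\bigr)$ and record the partial derivatives $\phi_x,\phi_y,\phi_{yy},\phi_{yyy},\ldots$ that appear, noting that the third component is a perfect square so its $y$-Taylor expansion near $y=0$ starts at $b_0(x)^2$ with no linear term, which is exactly why $\kappa_c^\phi(0)=0$.

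Next I would assemble the unit normal $\nu$ along $\phi$ and the function $\lambda=\det(\phi_x,\phi_y,\nu)$ restricted to a neighbourhood of $S(\phi)$, only to the jet order needed: for $\kappa_s^\phi$ we need $\widehat\gamma',\widehat\gamma'',\widehat\gamma'''$ and $\nu\circ\gamma$ up to the relevant order, and $\operatorname{sgn}(d\lambda(\eta))$, which here is a fixed sign that can be normalised to $+1$ (consistent with $\kappa_s^\phi(0)=a''(0)$ rather than $|a''(0)|$). The key simplification throughout is that everything is evaluated on $y=0$, so only a bounded-order $y$-jet of each ingredient matters: $\eta^2\phi|_{y=0}$, $\eta^3\phi|_{y=0}$, $\xi\eta^2\phi|_{y=0}$, etc., are polynomials in $a^{(i)}(0),b_0^{(i)}(0),b_1^{(i)}(0),b_2^{(i)}(0)$, and the denominators $|\xi f\times\eta^2 f|$, $|\widehat\gamma'|$ are smooth and nonvanishing at $0$ because the singularity is of the first kind (as remarked in the excerpt). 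For the $x$-derivatives $(\kappa_\bullet^\phi)'(0)$ and $(\kappa_\bullet^\phi)''(0)$ I would differentiate the invariant as a function of $x$ along $\gamma$ and evaluate at $0$; the cleanest bookkeeping is to first expand each invariant as a function of $x$ to second order using the $x$-Taylor data of $a,b_0,b_1,b_2$, then read off the coefficients. It is worth organising the computation so that the block for $\kappa_s^\phi$ is done first, then $\kappa_\nu^\phi$ (which only needs $\langle\widehat\gamma'',\nu\rangle/|\widehat\gamma'|^2$ and its $x$-jet — note $\kappa_\nu^\phi(0)=2b_0'(0)^2\ge0$, reflecting that the surface lies on one side of the tangent cone $Z=0$), then $\kappa_t^\phi$ from \eqref{eq:invdef3}, and finally $\kappa_c^\phi$ from \eqref{eq:invdef2}, reusing the already-computed $\xi f,\eta^2 f,\eta^3 f$ jets.

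The main obstacle is sheer computational volume and the control of error terms: the second $x$-derivatives involve up to fourth-order Taylor data ($a^{(4)}(0)$, $b_0'''(0)$, $b_1''(0)$, $b_2'(0)$ all appear), the third component of $\phi$ is a square so each differentiation roughly doubles the number of terms, and the normalised invariants carry fractional powers ($3/2$, $5/2$, etc.) of quantities like $|\xi f\times\eta^2 f|$ whose $x$-expansion must be kept to the right order before the power is expanded. I would handle this by (i) fixing once and for all the normalisations $|\xi f\times\eta^2 f|^2|_{0}$ and $|\widehat\gamma'|^2|_{0}$ and their first/second $x$-derivatives, (ii) expanding $(1+t)^{p}$ to second order in the small parameter $t$ measuring the $x$-deviation of these quantities, and (iii) verifying the leading terms against the known cuspidal-edge values $\kappa_s(0)=|a''(0)|$, $\kappa_\nu(0)=b_0''(0)$, $\kappa_c(0)=6b_2(0)$, $\kappa_t(0)=2b_1'(0)$ from the Proposition above applied to $\phi$ directly — i.e. checking that setting the folding data consistently recovers those, which is a strong internal consistency check. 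A computer algebra system is the natural tool here; conceptually there is no difficulty beyond the one already present in the cuspidal-edge normal-form computation of \cite{martinssaji1}, only more of it.
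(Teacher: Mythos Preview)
Your proposal is correct and takes essentially the same approach as the paper: compute the four invariants $\kappa_s^\phi,\kappa_\nu^\phi,\kappa_t^\phi,\kappa_c^\phi$ explicitly as functions of $x$ along the singular curve $y=0$ using the adapted pair $(\xi,\eta)=(\partial_x,\partial_y)$ and the formulas \eqref{eq:invdef1}--\eqref{eq:invdef3}, then differentiate at $x=0$. The paper simply records the closed-form expressions for each invariant in $x$ (packaged with auxiliary factors $A(x),B(x)$) and then says ``the rest follows by direct computation''; your description of organising the Taylor expansions and handling the fractional powers is just a more detailed account of the same brute-force calculation.
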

\begin{proof}
By a straightforward calculation,
we see that
$\kappa_s^\phi(x)$, $\kappa_\nu^\phi(x)$,
$\kappa_t^\phi(x)$ and $\kappa_c^\phi(x)$ are equal to
\begin{align*}
\kappa_s^\phi(x)=&\dfrac{1}{A^{3/2}B^{1/2}}\Bigg(
4 b_0(x)^2 \Big(b_0'(x) (b_0'(x) a''(x)-a'(x) b_0''(x))\\
&\hspace{10mm}+2 b_1(x)
\big(-a'(x) b_0'(x) a''(x)+b_0''(x)+a'(x)^2 b_0''(x)\big)\Big)\\
&\hspace{10mm}+4 b_0(x) b_0'(x)^2 \big(2 b_1(x) (1+a'(x)^2)-a'(x) b_0'(x)\big)+a''(x)\Bigg),\displaybreak[3]\\
\kappa_\nu^\phi(x)=&
\dfrac{2}{AB^{1/2}}
\Big(b_0'(x)^2+b_0(x) \big(-2 b_1(x) a''(x)+b_0''(x)\big)\Big),
\displaybreak[3]\\
\kappa_t^\phi(x)=
&\dfrac{2}{AB}\Bigg(
2 b_0(x) a'(x)^2 b_1'(x)
+
2 b_0(x)b_1'(x)+8  b_0(x)^3 b_0'(x)^2b_1'(x)\\
&\hspace{3mm}
+16 b_0(x)^3 b_1(x)^2 b_0'(x) a''(x)
-a'(x)b_0'(x)^2-a'(x)b_0(x) b_0''(x)\\
&\hspace{3mm}+2 b_1(x) \Big(b_0(x) a'(x) a''(x)
+b_0'(x)+a'(x)^2b_0'(x)-4 b_0(x)^3 b_0''(x)b_0'(x)\Big)\Bigg),
\displaybreak[3]\\
\kappa_c^\phi(x)=&
\dfrac{12}{B^{5/4}}
b_0(x) b_2(x)A^{3/4},
\end{align*}
where
\begin{align*}
A=&1+a'(x)^2+4 b_0(x)^2 b_0'(x)^2,\\
B=&1+16 b_0(x)^2 b_1(x)^2+4 b_0(x)^2 (-2 b_1(x) a'(x)+b_0'(x))^2.
\end{align*}
The rest follows by direct computation.
\end{proof}
We remark that similarly we can obtain higher derivatives of
the invariants but we omit them here.
One can also consider the curvature $\kappa_{\Sigma}$
and the torsion $\tau_{\Sigma}$ of the cuspidal edge curve as
a regular space curve $\Sigma$. These are given by
\begin{equation}\label{eq:ksigmatsigma}
\kappa_{\Sigma}(0)=\sqrt{a_0''(0)^2+4b_0'(0)^4},\quad
\tau_{\Sigma}(0)
=
\frac{2b_0'(0)(3a''(0)b_0''(0)-a'''(0)b_0'(0))}{a_0''(0)^2+4b_0'(0)^4}.
\end{equation}
These two invariants are related to the ones above.
In fact, we have
$$
\kappa_{\Sigma}^2=(\kappa_s^\phi)^2+(\kappa_{\nu}^\phi)^2,\quad
\tau_{\Sigma}=\frac{\kappa_s^\phi(\kappa_{\nu}^\phi)'
-(\kappa_s^\phi)'\kappa_{\nu}^\phi}{(\kappa_s^\phi)^2
+(\kappa_{\nu}^\phi)^2}+\kappa_t^\phi.
$$

Now we assume that
$\phi$ is a cuspidal cross-cap.
Then there is the double point curve.
Here we calculate its invariants.
To calculate the double point curve
take $(x_1,y_1)$ and $(x_2,y_2)$ such that
$\phi(x_1,y_1)=\phi(x_2,y_2)$.
From the first and second components
we get that $x_1=x_2$ and $y_1=-y_2$.
By analysing the equality in the third component, the
double point curve of $\phi$ can be parameterized as
$$
d(y)=\left(\dfrac{d_2}{2}y^2+\dfrac{d_4}{4!}y^4+O(5),y\right),
$$
where $O(n)$ stands for the terms whose degrees are greater than or
equal to $n$, and
$$
d_2 = -\dfrac{2 b_1(0)}{b_0'(0)},\
d_4 = -\dfrac{12}{b_0'(0)^3}
\Big(2 b_3(0, 0) b_0'(0)^2 +
      b_1(0) (-2 b_0'(0) b_1'(0) +
         b_1(0) b_0''(0))\Big).
$$
We set
$$
\tilde d(y)=f(d(y)),\qquad
\widehat d(y)=g\circ f(d(y)).
$$
Then
$
\tilde d''=
(-2 b_1(0)/b_0'(0),1,0)$,
$
\tilde d'''=
(0,0,6 b_2(0))$,
and
$$
\tilde d^{(4)}=
12\left(-
\dfrac{2 b_3(0) b_0'(0)^2
-2 b_0'(0) b_1'(0)b_1(0) +b_1(0)^2 b_0''(0)}
{b_0'(0)^3},
\dfrac{b_1(0)^2 a_0''(0)}{b_0'(0)^2},0\right).
$$
Thus $\tilde d$ at $0$ is of $(2,3)$-type.
The invariants defined in Section \ref{sec:spcurv} are
\begin{align*}
\kappa_{sing}(\tilde d)
=&
6|b_2(0)|
\left(1+\dfrac{4 b_1(0)^2}{b_0'(0)^2}\right)^{3/4},\\
\tau_{sing}(\tilde d)
=&
-2\left(1+\dfrac{4 b_1(0)^2}{b_0'(0)^2}\right)^{1/2}\\
&\hspace{8mm}
\dfrac{2 b_3(0) b_0'(0)^2
-2 b_0'(0) b_1'(0)b_1(0) -2 b_1(0)^3 a_0''(0)+b_1(0)^2 b_0''(0)}
{b_2(0) b_0'(0) (4 b_1(0)^2+b_0'(0)^2)},
\end{align*}
and $\sigma_{sing}=0$.
We remark that these
invariants are geometric invariants of a singular space curve on a cuspidal edge, but they are also geometric invariants of the cuspidal cross-cap obtained by folding that cuspidal edge.

On the other hand,
$\widehat{d}'(0)=\widehat{d}'''(0)=0$, and
\begin{align}
\widehat{d}''(0)=&\left(-\frac{2b_1(0)}{b_0'(0)},1,0\right),
\label{eq:dhatpp}\\
\widehat{d}^{(4)}(0)
=&\left(
\frac{-2b_3(0)b_0'(0)^2+2b_0'(0)b_1'(0)b_1(0)-b_1(0)^2b_0''(0))}{b_0'(0)^3},
\frac{b_1(0)^2a_0''(0)}{b_0'(0)^2},0\right).
\label{eq:dhatpppp}
\end{align}
Since $\widehat{d}''(0)\times\widehat{d}^{(4)}(0)\ne0$,
the limiting tangent vector of the curve $\widehat{d}$ can be
considered to be $\widehat{d}''(0)$ and the osculating plane is
generated by $\widehat{d}''(0)$ and $\widehat{d}^{(4)}(0)$.
Moreover,
one can take the limit tending to $y=0$
of the curvature $\kappa_{\widehat{d}}$
and the torsion $\tau_{\widehat{d}}$ of $\widehat{d}$
along a regular space curve on $\{y\ne0\}$ as follows:
\begin{prop}
For a cuspidal cross-cap obtained by folding a cuspidal edge with parametrisation\/ $\phi$ as in Proposition\/ {\rm \ref{cuspinv}}
$$
\lim_{y\to0}\kappa^2(y)
=
\Bigg(
\dfrac{
2 b_3(0) b_0'(0)^2
-2 b_0'(0) b_1'(0)b_1(0) +b_1(0)^2 b_0''(0)
}{
324 (4 b_1(0)^2+b_0'(0)^2)^3}
\Bigg)^2
$$
and
$$\lim_{y\to0}
\tau(y)
=
\dfrac{48 b_2(0)^2 b_0'(0)^3}
{2 b_3(0) b_0'(0)^2
-2 b_0'(0) b_1'(0)b_1(0) +b_1(0)^2 b_0''(0)}.
$$
\end{prop}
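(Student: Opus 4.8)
\emph{Proof proposal.}
My plan is to recognise $\widehat d$ as an even reparametrisation of a genuinely regular space curve, so that the two limits become the curvature and torsion of that curve at a single point; the rest is then produced by the classical Frenet formulas together with elementary algebra.

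\textbf{Step 1 (symmetry of the double point curve).} With $\phi=g\circ f$ as in \eqref{eq:folding} ($f$ in the normal form \eqref{eq:normal}, $g(X,Y,Z)=(X,Y,Z^2)$), equality of the first two components at a double point already forces $x_1=x_2$ and $y_1=-y_2$, while the equality of the third components factors into a factor with leading term $2b_2(0)y^3$ — nonzero for $0<|y|$ small since $b_2(0)\neq0$ — times a function that is \emph{even} in $y$. Hence the defining equation of $d$ depends on $y$ only through $y^2$, i.e.\ $d(y)=(\delta(y^2),y)$ for a smooth germ $\delta$ with $\delta(0)=0$. It follows that the first two components of $\widehat d(y)=g\circ f\circ d(y)$ are even in $y$, and its third component equals $\tilde d_3(y)^2$, where $\tilde d_3$ (the third component of $\tilde d=f\circ d$) is \emph{odd} in $y$ with leading term $b_2(0)y^3$; therefore $\widehat d(y)=\bar d(y^2)$ for the smooth germ $\bar d(s)=\bigl(\delta(s),\,a(\delta(s))+\tfrac s2,\,s^3T(s)^2\bigr)$, where $T(0)=b_2(0)\neq0$. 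Since $\bar d'(0)=\bigl(\delta'(0),\tfrac12,0\bigr)\neq0$ (using $a'(0)=0$), $\bar d$ is regular at $0$.

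\textbf{Step 2 (reduction of the limits).} Curvature and torsion being parametrisation-invariant, $\kappa_{\widehat d}(y)=\kappa_{\bar d}(y^2)$ and $\tau_{\widehat d}(y)=\tau_{\bar d}(y^2)$ for $y\neq0$. Provided $\kappa_{\bar d}(0)\neq0$ — a genericity condition, equivalent to the non-vanishing of the quantity occurring in the denominators in the statement — $\kappa_{\bar d}$ and $\tau_{\bar d}$ are continuous at $s=0$, so $\lim_{y\to0}\kappa(y)=\kappa_{\bar d}(0)$ and $\lim_{y\to0}\tau(y)=\tau_{\bar d}(0)$.

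\textbf{Step 3 (Frenet formulas).} From $\widehat d(y)=\bar d(y^2)$ one gets $\bar d'(0)=\tfrac12\widehat d''(0)$ and $\bar d''(0)=\tfrac1{12}\widehat d^{(4)}(0)$ — the vectors \eqref{eq:dhatpp} and \eqref{eq:dhatpppp}, both lying in the plane $Z=0$ — while $\bar d'''(0)$ has $Z$-component $6T(0)^2=6b_2(0)^2$ and $(X,Y)$-components computable from the $4$-jet of $\delta$, i.e.\ from $d_2=-2b_1(0)/b_0'(0)$ and $d_4$. Feeding these into
\[
\kappa_{\bar d}(0)=\frac{|\bar d'(0)\times\bar d''(0)|}{|\bar d'(0)|^3},\qquad
\tau_{\bar d}(0)=\frac{\det\bigl(\bar d'(0),\bar d''(0),\bar d'''(0)\bigr)}{|\bar d'(0)\times\bar d''(0)|^2},
\]
and using that $\bar d'(0)\times\bar d''(0)$ is a multiple of $(0,0,1)$ — so its norm is the modulus of the $2\times2$ determinant $D$ of the $(X,Y)$-parts of $\bar d'(0),\bar d''(0)$, and $\det\bigl(\bar d'(0),\bar d''(0),\bar d'''(0)\bigr)=6b_2(0)^2D$ — the formulas collapse to $\kappa_{\bar d}(0)=|D|/|\bar d'(0)|^3$ and $\tau_{\bar d}(0)=6b_2(0)^2/D$. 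Substituting the expressions for $d_2,d_4$ and simplifying gives the stated formulas for $\lim\kappa^2$ and $\lim\tau$.

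\textbf{Main obstacle.} There is no real conceptual difficulty once Step~1 is available; the entire task is the bookkeeping in Step~3 — correctly obtaining $\bar d''(0)$ and $\bar d'''(0)$ from the normal-form data (which is precisely where $d_4$, the $4$-jet of the double point curve, enters) and carrying through the algebra — together with the minor point of invoking genericity so that $\kappa_{\bar d}(0)\neq0$ and the torsion limit is defined. If one wants to avoid the reparametrisation, one may instead expand $\kappa(y)$ and $\tau(y)$ directly: from $\widehat d'(0)=\widehat d'''(0)=0$ and \eqref{eq:dhatpp}, \eqref{eq:dhatpppp} the numerator and denominator of $\kappa$ both vanish to order $3$ in $y$ and those of $\tau$ to order $6$, so each limit is the ratio of leading Taylor coefficients; but then the $y$-expansions must be pushed to order six, which is more error-prone.
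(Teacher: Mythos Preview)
Your argument is sound: the evenness of $d$ (your Step~1) is correctly derived from the factorisation $Z(x,y)^2-Z(x,-y)^2=(Z(x,y)-Z(x,-y))(Z(x,y)+Z(x,-y))$, and once $\widehat d(y)=\bar d(y^2)$ with $\bar d$ regular, the limits really are $\kappa_{\bar d}(0)^2$ and $\tau_{\bar d}(0)$. The Frenet computation in Step~3 then goes through as you describe, because $\bar d'(0)$ and $\bar d''(0)$ lie in $\{Z=0\}$ while the $Z$-component of $\bar d'''(0)$ is $6b_2(0)^2$.

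The paper takes a different route: it does not introduce the regular curve $\bar d$ at all, but instead applies l'H\^opital's rule six times directly to the rational expressions
\[
\kappa^2(y)=\dfrac{|\widehat d'(y)\times\widehat d''(y)|^2}{|\widehat d'(y)|^6},\qquad
\tau(y)=\dfrac{\det(\widehat d'(y),\widehat d''(y),\widehat d'''(y))}{|\widehat d'(y)\times\widehat d''(y)|^2},
\]
arriving at closed formulas in $\widehat d''(0)$, $\widehat d^{(4)}(0)$ and $\widehat d^{(6)}(0)$. This is exactly the alternative you sketch in your ``Main obstacle'' paragraph. Your reparametrisation avoids the six differentiations and never needs $\widehat d^{(6)}(0)$ explicitly --- the information it would carry is packaged into $\bar d'''(0)$, and only its $Z$-component survives the determinant --- so the bookkeeping is lighter and the geometric meaning (limits of curvature and torsion are the curvature and torsion of the underlying regular curve) is transparent. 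The paper's approach, in exchange, requires no preliminary symmetry argument and gives the answer purely in terms of derivatives of $\widehat d$ at $0$. Both lead to the same final expressions once one substitutes \eqref{eq:dhatpp}--\eqref{eq:dhatpppp} and the value $T(0)=b_2(0)$.
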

\begin{proof}
Let $\kappa$, $\tau$ be the curvature and the torsion
of $\widehat d$ as a space curve.
By applying l'H\^opital's rule 6 times,
we have
\begin{align*}
\lim_{y\to0}
\kappa^2(y)
&=
\lim_{y\to0}\dfrac{(\hat d'(y)\times \hat d''(y))
\cdot(\hat d'(y)\times \hat d''(y))}
{(\hat d'(y)\cdot\hat d'(y))^3}\\
&=
\dfrac{(\hat d''(0)\times \hat d^{(4)}(0))\cdot
(\hat d''(0)\times \hat d^{(4)}(0))}
{36(\hat d''(0)\cdot\hat d''(0))^3}\\
\lim_{y\to0}
\tau(y)
&=
\lim_{y\to0}
\dfrac{\det(\hat d'(y),\hat d''(y),\hat d'''(y))}
{(\hat d'(y)\times \hat d''(y))
\cdot(\hat d'(y)\times \hat d''(y))}\\
&=
\dfrac{4\det(\hat d''(y),\hat d^{(4)}(y),\hat d^{(6)}(y))}
{5(\hat d''(0)\times \hat d^{(4)}(0))\cdot
(\hat d''(0)\times \hat d^{(4)}(0))}.
\end{align*}
\end{proof}
We can obtain in a similar way the limits
of the geodesic and normal curvature of the double point
curve as a curve on the cuspidal cross-cap surface,
but they are related to the previous invariants and we will omit them here.

Since $\phi$ is not a cuspidal edge,
the invariants $B$ and $\kappa_c^{{\rm r}}$ can be defined
by \eqref{eq:bias} and \eqref{eq:kcr} respectively.
Since $(\partial_x,\partial_y)$ is an adapted pair of vector fields,
we obtain
$$
B=24 b_1(0)^2,\qquad
\kappa_c^{{\rm r}}=720 b_1(0)b_2(0).
$$

\subsection{Geometric invariants up to order 5}

Let
$
\phi(x,y)
$
be a cuspidal cross-cap obtained by \eqref{eq:folding}.
In the previous subsections we have obtained the values of different geometric invariants of the cuspidal cross-cap in terms of the coefficients of the generic cuspidal edge. Most of these invariants are independent of each other and in fact determine all the coefficients of the folded cuspidal edge up to order 5.

\begin{theo}
Let $h_1,h_2:(\mathbb R^2,0)\rightarrow(\mathbb R^3,0)$ be map-germs with a cuspidal cross-cap at 0 obtained by folding two generic cuspidal edges. Suppose that the following 16 invariants are the same at 0: $\kappa_s^{\phi},\kappa_{\nu}^\phi,\kappa_t^\phi$, $(\kappa_s^{\phi})',(\kappa_{\nu}^\phi)',(\kappa_t^\phi)',(\kappa_c^\phi)'$, $(\kappa_s^{\phi})'',(\kappa_{\nu}^\phi)'',(\kappa_t^\phi)'',(\kappa_c^\phi)''$, $(\kappa_s^{\phi})''',(\kappa_{\nu}^\phi)'''$, $B,\kappa_c^{{\rm r}}$ and $\tau_{sing}(\tilde d)$. Then there exists a germ of diffeomorphism $\varphi:(\mathbb R^2,0)\rightarrow(\mathbb R^2,0)$ and a germ of an isometry $\Phi:(\mathbb R^3,0)\rightarrow(\mathbb R^3,0)$ such that $$h_1(u,v)-\Phi(h_2(\varphi(u,v)))\in O(6),$$ where $O(6)=\{h:(\mathbb R^2,0)\rightarrow(\mathbb R^3,0):j^5h=0\}.$
\end{theo}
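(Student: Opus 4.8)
\emph{Proof plan.} The plan is to reduce both germs to the standard folded form of the preceding subsection, observe that the $5$-jet of such a form is encoded by finitely many Taylor coefficients of the underlying cuspidal edge, and then recover all of those coefficients from the $16$ invariants using Proposition~\ref{cuspinv} together with the formulas $B=24b_1(0)^2$, $\kappa_c^{\rm r}=720b_1(0)b_2(0)$ and the one for $\tau_{sing}(\tilde d)$.

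First, after composing each $h_i$ with a germ of diffeomorphism in the source and a germ of isometry in the target, I would assume (as in the setup of \eqref{eq:folding}) that $h_i=\phi_i:=g\circ f_i$, with $f_i$ the right-hand side of \eqref{eq:normal} and its coefficient functions satisfying $a(0)=a'(0)=b_0(0)=0$, $b_0'(0)\ne0$, $b_2(0)\ne0$; since replacing $f_i$ by $\mathrm{refl}_Z\circ f_i$, $\mathrm{refl}_Z(X,Y,Z)=(X,Y,-Z)$, turns $(b_0,b_1,b_2,b_3)$ into its negative while leaving $\phi_i=g\circ f_i$ unchanged, I may also take $b_0'(0)>0$. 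The $16$ invariants, being preserved by source diffeomorphisms and target isometries, still coincide when computed from $\phi_1$ and $\phi_2$. Inspecting \eqref{eq:folding} shows that $j^5(g\circ f)$ depends only on the fourteen numbers $a''(0),a'''(0),a^{(4)}(0),a^{(5)}(0)$; $b_0'(0),b_0''(0),b_0'''(0),b_0^{(4)}(0)$; $b_1(0),b_1'(0),b_1''(0)$; $b_2(0),b_2'(0)$; $b_3(0,0)$, because the first component of $\phi$ is $x$, the second is $a(x)+y^2/2$, and in the third component $\big(b_0(x)+b_1(x)y^2+b_2(x)y^3+b_3(x,y)y^4\big)^2$ no further jet of the coefficient functions contributes in total degree $\le5$. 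So it remains to show that the $16$ invariants determine these fourteen numbers.

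The core of the argument is that, read in a suitable order, the formulas of Proposition~\ref{cuspinv} present the invariants as a \emph{triangular} system: each new invariant isolates one further coefficient, which appears linearly with a nonzero weight (a power of $b_0'(0)$, or $1$), all other terms involving only coefficients already found. I would process them as follows: $\kappa_s^\phi(0)$ and $\kappa_\nu^\phi(0)=2b_0'(0)^2$ give $a''(0)$ and $b_0'(0)$ (sign fixed above); then $\kappa_t^\phi(0)=4b_1(0)b_0'(0)$ and $(\kappa_c^\phi)'(0)=12b_2(0)b_0'(0)$ give $b_1(0)$, $b_2(0)$; then $(\kappa_s^\phi)'(0),(\kappa_\nu^\phi)'(0),(\kappa_t^\phi)'(0),(\kappa_c^\phi)''(0)$ give $a'''(0),b_0''(0),b_1'(0),b_2'(0)$; then $(\kappa_s^\phi)''(0),(\kappa_\nu^\phi)''(0),(\kappa_t^\phi)''(0)$ give $a^{(4)}(0),b_0'''(0),b_1''(0)$ (the new coefficient entering the last two with weight $8b_0'(0)$, resp.\ $12b_0'(0)$); then the analogues $(\kappa_s^\phi)'''(0),(\kappa_\nu^\phi)'''(0)$, obtained by the same computation as in Proposition~\ref{cuspinv}, give $a^{(5)}(0),b_0^{(4)}(0)$ (the latter with weight $10b_0'(0)$); and finally $\tau_{sing}(\tilde d)$ gives $b_3(0,0)$, which enters its formula only through the summand $2b_3(0,0)b_0'(0)^2$, $b_0'(0)\ne0$, everything else being already known. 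The invariants $B$ and $\kappa_c^{\rm r}$, being functions of $\kappa_\nu^\phi(0),\kappa_t^\phi(0),(\kappa_c^\phi)'(0)$, come out automatically equal and merely serve as consistency checks. Hence $\phi_1,\phi_2$ share all fourteen coefficients, so $j^5\phi_1=j^5\phi_2$, i.e.\ $\phi_1-\phi_2\in O(6)$; undoing the normalisations $h_i=\Psi_i\circ\phi_i\circ\psi_i$ and using that $O(6)$ is stable under post-composition with linear isometries and pre-composition with diffeomorphism-germs fixing $0$, I obtain $h_1-\Phi\circ h_2\circ\varphi\in O(6)$ with $\Phi=\Psi_1\circ\Psi_2^{-1}$, $\varphi=\psi_2^{-1}\circ\psi_1$.

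The hard part will be establishing the triangularity in full: pushing Proposition~\ref{cuspinv} two derivatives further, to $(\kappa_s^\phi)'''(0)$ and $(\kappa_\nu^\phi)'''(0)$, and checking that at every one of the fourteen steps the ``new'' coefficient is genuinely isolated with nonvanishing weight, so that no hidden relation among the $16$ invariants obstructs the recursion. A secondary, essentially bookkeeping, difficulty is to fix the orientation conventions under which the pseudo-scalar invariants ($\kappa_t^\phi$, $\tau_{sing}(\tilde d)$, and the derivatives along the singular curve) are defined, so as to be sure that the invariants of $h_i$ really equal those of the normalised $\phi_i$ and that the remaining sign freedom in $b_0'(0)$ is harmless.
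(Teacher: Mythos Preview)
Your proposal is correct and follows essentially the same strategy as the paper: normalize both germs to the folded form $\phi$, then show that the finitely many coefficients determining $j^5\phi$ are recoverable from the $16$ invariants via a triangular reading of the formulas of Proposition~\ref{cuspinv} (extended to third derivatives) together with those for $B$, $\kappa_c^{\rm r}$, and $\tau_{sing}(\tilde d)$. The only difference is presentational---the paper writes each coefficient $f_i$, $g_{ij}$ of the expanded $j^5\phi$ directly as a polynomial in the invariants, while you first solve for the fourteen underlying cuspidal-edge jets $a^{(k)}(0)$, $b_j^{(k)}(0)$ and then deduce $j^5\phi$; your remark that $B$ and $\kappa_c^{\rm r}$ are actually redundant (functions of $\kappa_\nu^\phi$, $\kappa_t^\phi$, $(\kappa_c^\phi)'$) is correct but not made explicit in the paper.
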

\begin{proof}
By expanding $\phi(x,y)$
up to order 5 we get
\begin{align*}
&\Bigg(x,\frac{a''(0)}{2}x^2+\frac{a'''(0)}{6}x^3+\frac{a^{(4)}(0)}{24}x^4+\frac{a^{(5)}(0)}{120}x^5+\frac{y^2}{2},b_0'(0)^2x^2+b_0'(0)b_0''(0)x^3\\
&+2b_0'(0)b_1(0)xy^2+(\frac{1}{3}b_0'(0)b_0'''(0)+\frac{1}{4}b_0''(0)^2)x^4+(2b_0'(0)b_1'(0)+b_0''(0)b_1(0))x^2y^2\\
&+2b_0'(0)b_2(0)xy^3+b_1(0)^2y^4+(\frac{1}{12}b_0'(0)b_0^{(4)}(0)+\frac{1}{3}b_0''(0)b_0'''(0))x^5\\
&+(b_0'(0)b_1''(0)+b_0''(0)b_1'(0)+\frac{1}{3}b_0'''(0)b_1(0))x^3y^2+(2b_0'(0)b_2'(0)+b_0''(0)b_2(0))x^2y^3\\
&+2(b_1(0)b_1'(0)+b_0'(0)b_3(0))xy^4+2b_1(0)b_2(0)y^5 \Bigg).
\end{align*}
Rewriting this expression as
$$\left(
x,\sum_{i=2}^5f_ix^i+\frac{y^2}{2},\sum_{2\leq i+j\leq
5}g_{ij}x^iy^j\right),
$$
we get the following relations between the
coefficients and the geometric invariants:
\begin{align*}
f_2&=\frac{1}{2}\kappa_s^\phi,\\
f_3&=\frac{1}{6}((\kappa_s^\phi)'-\kappa_{\nu}^{\phi}\kappa_t^\phi),\\
f_4&=\frac{1}{24}((\kappa_s^\phi)''-\kappa_{\nu}^{\phi}(\kappa_t^\phi)'-2(\kappa_{\nu}^{\phi})'\kappa_t^\phi+3(\kappa_{\nu}^\phi)^2\kappa_s^\phi+3(\kappa_s^{\phi})^3),\\
f_5&=\frac{1}{120}((\kappa_s^\phi)'''-\varphi_1),
\end{align*}
where $\varphi_1$ is a function of
$\kappa_s^\phi,\kappa_{\nu}^\phi,\kappa_t^\phi$ and their
derivatives up to order 2,
\begin{align*}
g_{20}&=\frac{1}{2}\kappa_{\nu}^\phi,\\
g_{30}&=\frac{1}{6}((\kappa_{\nu}^\phi)'+\kappa_t^\phi\kappa_s^\phi),\\
g_{40}&=\frac{1}{24}((\kappa_{\nu}^\phi)''+(\kappa_t^\phi)'\kappa_s^\phi+2\kappa_t^\phi(\kappa_s^\phi)'+3\kappa_{\nu}^\phi(\kappa_s^\phi)^2+(\kappa_{\nu}^\phi)^3-3(\kappa_t^\phi)^2\kappa_{\nu}^\phi),\\
g_{50}&=\frac{1}{120}((\kappa_{\nu}^\phi)'''-\varphi_2),
\end{align*}
where $\varphi_2$ is a function of
$\kappa_s^\phi,\kappa_{\nu}^\phi,\kappa_t^\phi$ and their
derivatives up to order 2, and
\begin{align*}
g_{12}&=\frac{1}{2}\kappa_t^\phi,\\
g_{22}&=\frac{1}{4}((\kappa_t^\phi)'+\kappa_s^\phi\kappa_{\nu}^\phi),\\
g_{32}&=\frac{1}{12}((\kappa_t^\phi)''+(\kappa_s^\phi)'\kappa_{\nu}^\phi+2\kappa_s^\phi(\kappa_{\nu}^\phi)'+2(\kappa_t^\phi)^3+4\kappa_t^\phi(\kappa_{\nu}^\phi)^2),\\
g_{13}&=\frac{1}{6}(\kappa_c^\phi)',\\
g_{23}&=\frac{1}{12}(\kappa_c^\phi)'',\\
g_{04}&=\frac{1}{24}B,\\
g_{05}&=\frac{1}{360}\kappa_c^{{\rm r}},\\
g_{14}&=\varphi_3,
\end{align*}
where $\varphi_3$ is a function of
$\kappa_s^\phi,\kappa_{\nu}^\phi,\kappa_t^\phi,(\kappa_c^\phi)',(\kappa_{\nu}^\phi)',(\kappa_t^\phi)',B,\kappa_c^{{\rm r}}$
and $\tau_{sing}(\widetilde d)$.
\end{proof}

\section{Flat geometry: contact with planes}

In this section we study the contact of the cuspidal cross-cap with planes.
Instead of analysing the different types of contact a
generic cuspidal cross-cap can have with a fixed plane, we fix a
model of the cuspidal cross-cap and study the contact with the zero
fibres of submersions. We then relate the singularities of the
height functions with the geometrical invariants studied in the
previous section.

\subsection{Submersions on the cuspidal cross-cap}

Given the $\mathcal A$-normal form $f(x,y)=(x,y^2,xy^3)$ of a
cuspidal cross-cap (or folded umbrella), we classify germs of
submersions $g:\mathbb R^3,0\to \mathbb R,0$ up to $\mathcal
R(X)$-equivalence, with $X=f(\mathbb R^2,0)$.
Here, for a subset germ $X,0\subset \mathbb R^3,0$,
two map germs $g_1,g_2:\mathbb R^3,0\to \mathbb R,0$ are
$\mathcal R(X)$-equivalent if
there exists a diffeomorphism germ
$k:\mathbb R^3,0\to \mathbb R^3,0$
satisfying $k(X)=X$ and $g_1\circ k=g_2$. Following \cite{osettari2} we use the finer $\mathcal R(X)$-equivalence instead of $\mathcal K(X)$-equivalence.
The defining equation
of $X$ is given by $h(u,v,w)=w^2-u^2v^3$.

Let $\Theta(X)$ be the $\mathcal{E}_3$-module of vector fields in
$\mathbb R^3$ tangent to $X$ (called $Derlog(X)$ in other texts),
where
$\mathcal E_3$ is the ring of germs of real functions in 3 variables
and $\mathcal M_3$ is its maximal ideal. We have $\xi\in\Theta(X)$
if and only if $\xi h=\lambda h$ for some function $\lambda$
(\cite{brucewest}).

\begin{prop}\label{prop:genO(X)}
The $\mathcal{E}_3$-module $\Theta(X) $ of vector fields in $\mathbb
R^3$ tangent to $X$ is generated by the vector fields $
\xi_1=3u\frac{\partial}{\partial u}-2v\frac{\partial}{\partial v},$
$\xi_2=2v\frac{\partial}{\partial v}+3w\frac{\partial}{\partial w},$
$\xi_3=2w\frac{\partial}{\partial v}+3u^2v^2\frac{\partial}{\partial
w},$ $\xi_4=w\frac{\partial}{\partial
u}+uv^3\frac{\partial}{\partial w}. $
\end{prop}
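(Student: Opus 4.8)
The plan is to exploit the description $\xi\in\Theta(X)$ $\iff$ $\xi h=\lambda h$ for some $\lambda\in\mathcal E_3$, where $h(u,v,w)=w^2-u^2v^3$, and to reduce the statement to a syzygy computation over a regular sequence. First I would record $h_u=-2uv^3$, $h_v=-3u^2v^2$, $h_w=2w$ and check by direct substitution that $\xi_1h=0$, $\xi_2h=6h$, $\xi_3h=0$ and $\xi_4h=0$, so that $\xi_1,\xi_2,\xi_3,\xi_4\in\Theta(X)$. Since $h$ is a non-zero-divisor in $\mathcal E_3$ (its zero set has empty interior), the function $\lambda$ attached to a tangent field is unique; as $\xi_2$ realizes $\lambda=6$, replacing an arbitrary $\xi\in\Theta(X)$ by $\xi-\tfrac{\lambda}{6}\xi_2$ reduces everything to proving that the submodule $\{\xi:\xi h=0\}$ is generated by $\xi_1,\xi_3,\xi_4$.

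So let $\xi=a\,\partial_u+b\,\partial_v+c\,\partial_w$ satisfy $\xi h=-2uv^3a-3u^2v^2b+2wc=0$, i.e.\ $2wc=uv^2(2va+3ub)$. The delicate step is to peel the factor $uv^2$ off $c$. Restricting this identity to $\{u=0\}$, where $w$ vanishes only on a nowhere-dense set, and using Hadamard's lemma gives $c=uc'$; dividing by the non-zero-divisor $u$ leaves $2wc'=v^2(2va+3ub)$; restricting to $\{v=0\}$ twice in the same way gives $c'=v^2\gamma$ for some $\gamma\in\mathcal E_3$; and dividing by $v^2$ leaves $2w\gamma=2va+3ub$. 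Thus $c=uv^2\gamma$ and $(a,b,-\gamma)$ is a syzygy of the triple $(2v,3u,2w)$.

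The point of the reduction is that, unlike $(h_u,h_v,h_w)$, the triple $(v,u,w)$ is a regular sequence in $\mathcal E_3$ ($w$ is a non-zero-divisor, $u$ is a non-zero-divisor modulo $(w)$, and $v$ is a non-zero-divisor modulo $(w,u)$), and rescaling entries by units preserves this; hence the syzygy module of $(2v,3u,2w)$ is generated by the Koszul syzygies $(3u,-2v,0)$, $(0,2w,-3u)$, $(2w,0,-2v)$. Writing $(a,b,-\gamma)=\mu_1(3u,-2v,0)+\mu_2(0,2w,-3u)+\mu_3(2w,0,-2v)$ with $\mu_i\in\mathcal E_3$, reading off $a$, $b$, $\gamma$ and substituting $c=uv^2\gamma$, a short check gives $\xi=\mu_1\xi_1+\mu_2\xi_3+2\mu_3\xi_4$; together with the first reduction this proves $\Theta(X)=\mathcal E_3\langle\xi_1,\xi_2,\xi_3,\xi_4\rangle$. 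I expect the middle step to be the only genuine obstacle: the generators of $\Theta(X)$ are not simply the Koszul syzygies of $\nabla h$ (that triple fails to be a regular sequence along the singular set $\{w=uv=0\}$), and one must divide $c$ by $uv^2$ first in order to land on a genuinely regular sequence; the remaining tangency checks and the final substitution are routine.
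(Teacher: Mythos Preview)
Your proof is correct and shares the paper's overall strategy: both first reduce to computing the annihilator $\Theta_0(X)=\{\xi:\xi h=0\}$ by splitting off an Euler-type field (the paper invokes the Bruce--Roberts decomposition $\Theta(X)=\langle\xi_e\rangle\oplus\Theta_0(X)$ for the quasihomogeneous $h$, with $\xi_e=\tfrac{1}{3}(\xi_1+4\xi_2)$; you subtract $\tfrac{\lambda}{6}\xi_2$ directly, which amounts to the same thing), and then argue that $\Theta_0(X)$ is generated by $\xi_1,\xi_3,\xi_4$. The difference is in this last step: the paper simply asserts that $\xi_1,\xi_3,\xi_4$ generate the kernel of $\xi\mapsto\xi h$, whereas you supply an explicit argument. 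Your key observation---that dividing $c$ by $uv^2$ converts the syzygy problem for $(h_u,h_v,h_w)=(-2uv^3,-3u^2v^2,2w)$ (which is \emph{not} a regular sequence, since all three vanish on the singular locus) into one for the genuine regular sequence $(2v,3u,2w)$, where Koszul syzygies suffice---is exactly the right way to make the paper's assertion rigorous by hand. So your route is a fleshed-out version of the paper's sketch rather than a genuinely different approach; what it buys is a self-contained proof that does not rely on an unexplained ``it can be seen'' or on a computer-algebra verification of $\operatorname{Derlog}(X)$.
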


\begin{proof}
Since $h$ is quasihomogeneous $\Theta(X)=\langle \xi_e\rangle\oplus
\Theta_0(X)$, where
$\xi_e=\frac{1}{3}(\xi_1+4\xi_2)=u\frac{\partial}{\partial
u}+2v\frac{\partial}{\partial v}+4w\frac{\partial}{\partial w}$ is
the Euler vector field and $\Theta_0(X)$ are vector fields such that
$\xi(h)=0$ (see \cite{bruceroberts}). It can be seen that $\xi_1$,
$\xi_3$ and $\xi_4$ generate the kernel of the map $\Phi:\mathcal
O_3^3\rightarrow\mathbb R$ given by
$\Phi(\xi)=\sum_{i=1}^3\xi^i\frac{\partial h}{\partial x_i}$ for
$\xi=(\xi^1,\xi^2,\xi^3)$.
\end{proof}

Let $\Theta_1(X)=\{\delta\in\Theta(X):j^1\delta=0\}$. It follows
from Proposition \ref{prop:genO(X)} that
$$
\Theta_1(X)=\mathcal M_3.\{ \mathcal \xi_1,\xi_2,\xi_3,\xi_4\}.
$$

For $f\in \mathcal E_3$, we define  $\Theta(X){\cdot}
f=\{\eta(f)\,|\, \eta\in \Theta(X)\}$. We define similarly
$\Theta_1(X){\cdot} f$ and the following tangent spaces to the
$\mathcal R(X)$-orbit of $f$ at the germ $f$:
$$
L\mathcal R_1(X){\cdot}f=\Theta_1(X){\cdot} f,\quad L\mathcal
R(X){\cdot}f=L_e{\cal R}(X){\cdot}f=\Theta(X){\cdot} f.
$$

The ${\mathcal{R}(X)}$-codimension of $f$  is given by
$d(f,{\mathcal{R}(X)})=\dim_{\mathbb{R}}({\cal
M}_3/L{\mathcal{R}(X)}(f))\,.$

The listing of representatives of the orbits (i.e., the
classification) of $\mathcal R(X)$-finitely determined germs is
carried out inductively on the jet level. The method used here is
that of the complete transversal \cite{bkd} adapted for the
$\mathcal R(X)$-action. We have the following result which is a
version of Theorem 3.11 in \cite{brucewest} for the group $\mathcal
R(X)$.

\begin{prop}\label{prop:completeTrans}
Let $f:\mathbb R^3,0\to \mathbb R,0$ be a smooth germ and
$h_1,\ldots, h_r$ be homogeneous polynomials of degree $k+1$ with
the property that
$$
\mathcal{M}_3^{k+1} \subset L\mathcal R_1(X){\cdot}f + sp\{
h_1,\ldots, h_r\} +\mathcal{M}_3^{k+2}.
$$
Then any germ $g$ with $j^kg(0)=j^kf(0)$ is $\mathcal
R_1(X)$-equivalent to a germ of the form $f(x)+\sum_{i=1}^l
u_ih_i(x)+\phi(x) $, where $\phi(x)\in \mathcal M_n^{k+2}$. The
vector subspace $sp\{ h_1,\ldots, h_r\}$ is called a complete
$(k+1)$-$\mathcal R(X)$-transversal of $f$.
\end{prop}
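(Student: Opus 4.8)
The plan is to adapt the complete transversal method of \cite{bkd} to the geometric subgroup $\mathcal{R}_1(X)$, whose tangent space at the identity is $\Theta_1(X)$; this is the $\mathcal{R}(X)$-version of the argument behind Theorem 3.11 of \cite{brucewest}. The essential observation is that $\mathcal{R}_1(X)$ is generated by the time-one flows of vector fields $\delta\in\Theta_1(X)$: each such $\delta$ is tangent to $X$, vanishes at the origin and has $j^1\delta=0$, so its flow is a germ of diffeomorphism preserving $X$ with identity linear part, i.e. an element of $\mathcal{R}_1(X)$. Moreover, since the coefficients of $\delta$ lie in $\mathcal{M}_3^2$, one has $\delta(\mathcal{M}_3^{j})\subset\mathcal{M}_3^{j+1}$ for all $j$, a fact that will drive the order bookkeeping.

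Given $g$ with $j^kg(0)=j^kf(0)$, I would write $g=f+p+\rho$, where $p$ is the homogeneous part of $g-f$ of degree $k+1$ and $\rho\in\mathcal{M}_3^{k+2}$; since the conclusion tolerates an arbitrary $\mathcal{M}_3^{k+2}$ error, it suffices to absorb $p$ into $sp\{h_1,\dots,h_r\}$ by an element of $\mathcal{R}_1(X)$. Applying the hypothesis to $p\in\mathcal{M}_3^{k+1}$ produces $\delta_0\in\Theta_1(X)$ and $w=\sum_i u_i h_i$ with $p-\delta_0 f-w\in\mathcal{M}_3^{k+2}$; in particular $\delta_0 f\in\mathcal{M}_3^{k+1}$ with degree-$(k+1)$ part equal to $p-w$. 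Letting $\psi_s$ denote the flow of $\delta_0$ and $\psi=\psi_{-1}\in\mathcal{R}_1(X)$, I would compute
$$g\circ\psi=g-\int_0^1(\delta_0 g)\circ\psi_{-\tau}\,d\tau.$$
Because $\delta_0 g=\delta_0 f+\delta_0 p+\delta_0\rho$ with $\delta_0 f\in\mathcal{M}_3^{k+1}$ and $\delta_0 p,\delta_0\rho\in\mathcal{M}_3^{k+2}$, the integrand equals $\delta_0 f$ modulo $\mathcal{M}_3^{k+2}$ even after precomposition with $\psi_{-\tau}=\mathrm{id}+(\text{higher order})$, so $g\circ\psi=f+p-(p-w)+\mathcal{M}_3^{k+2}=f+w+\mathcal{M}_3^{k+2}$, which is the asserted normal form. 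Since $\delta_0 g\in\mathcal{M}_3^{k+1}$, the flow does not disturb the $k$-jet, so $j^k(g\circ\psi)=j^kf$ persists throughout.

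The main obstacle is the order bookkeeping in the integration step: one must confirm that $j^1\delta_0=0$ forces $\delta_0$ to raise the $\mathcal{M}_3$-adic order by one, and that the membership furnished by the hypothesis genuinely yields $\delta_0 f\in\mathcal{M}_3^{k+1}$, so that every contribution of the flow beyond its linear term $-\delta_0 g$ lands in $\mathcal{M}_3^{k+2}$; one must also check that the time-$(-1)$ flow of $\delta_0$ exists as a diffeomorphism germ and preserves $X$. Since only a single jet level is reduced, no convergence or finite-determinacy estimate is required here; iterating the statement, together with the generators of $\Theta(X)$ from Proposition \ref{prop:genO(X)}, is what later powers the inductive classification of $\mathcal{R}(X)$-finitely determined germs.
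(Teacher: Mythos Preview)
Your sketch is correct and is precisely the standard complete-transversal argument; the paper does not supply its own proof of this proposition but simply records it as the $\mathcal{R}(X)$-analogue of Theorem~3.11 in \cite{brucewest} (and \cite{bkd}), so there is no in-paper argument to compare against. The order bookkeeping you flag all goes through: the decomposition $p=\delta_0 f+w+q$ forces $\delta_0 f=p-w-q\in\mathcal{M}_3^{k+1}$, the condition $j^1\delta_0=0$ (coefficients in $\mathcal{M}_3^2$) gives $\delta_0 p,\delta_0\rho\in\mathcal{M}_3^{k+2}$ and ensures $\psi_{-\tau}-\mathrm{id}$ has components in $\mathcal{M}_3^2$, so the integral remainder $(\delta_0 g)\circ\psi_{-\tau}-\delta_0 g$ lies in $\mathcal{M}_3^{k+2}$ and the flow is a germ of diffeomorphism preserving $X$ with identity $1$-jet. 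One small overstatement: you do not need that $\mathcal{R}_1(X)$ is \emph{generated} by such flows, only that the flow of your single $\delta_0$ belongs to $\mathcal{R}_1(X)$, which is immediate.
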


\begin{cor}\label{cor:detrminacy} If $ \mathcal{M}_3^{k+1}\subset L\mathcal R_1(X){\cdot}f
+\mathcal{M}_3^{k+2}$ then $f$ is $k-\mathcal{R}(X)$-determined.
\end{cor}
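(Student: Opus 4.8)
The plan is to upgrade the hypothesis with Nakayama's lemma and then run the classical Mather-type homotopy argument, using the explicit generators of $\Theta(X)$ from Proposition \ref{prop:genO(X)} to control how the relevant tangent space varies along a path. First, since $\Theta_1(X)=\mathcal M_3\{\xi_1,\xi_2,\xi_3,\xi_4\}$, the module $L\mathcal R_1(X){\cdot}f=\Theta_1(X){\cdot}f$, and hence the quotient $P:=\mathcal M_3^{k+1}/(\mathcal M_3^{k+1}\cap L\mathcal R_1(X){\cdot}f)$, is finitely generated over $\mathcal E_3$. The hypothesis $\mathcal M_3^{k+1}\subset L\mathcal R_1(X){\cdot}f+\mathcal M_3^{k+2}$ says that every $x\in\mathcal M_3^{k+1}$ is, modulo $\mathcal M_3^{k+1}\cap L\mathcal R_1(X){\cdot}f$, congruent to an element of $\mathcal M_3\cdot\mathcal M_3^{k+1}$; thus $P=\mathcal M_3 P$, and Nakayama gives $P=0$, i.e. $\mathcal M_3^{k+1}\subset L\mathcal R_1(X){\cdot}f$.

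Next, let $g$ satisfy $j^kg(0)=j^kf(0)$, so $\psi:=g-f\in\mathcal M_3^{k+1}$, and put $F_t=f+t\psi$ for $t\in[0,1]$. Inspecting the generators shows each $\xi_i$ maps $\mathcal M_3^{k+1}$ into itself (it either preserves the standard filtration or raises orders by $3$), so $\delta(\psi)\in\mathcal M_3\cdot\mathcal M_3^{k+1}=\mathcal M_3^{k+2}$ for every $\delta\in\Theta_1(X)$; hence $\delta(f)=\delta(F_t)-t\,\delta(\psi)\in\Theta_1(X){\cdot}F_t+\mathcal M_3^{k+2}$, and together with the first step $\mathcal M_3^{k+1}\subset\Theta_1(X){\cdot}F_t+\mathcal M_3^{k+2}$ for every $t$. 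Repeating the Nakayama argument with the parameter $t$ adjoined (coefficients taken in $\mathcal E_{3+1}$, i.e.\ smooth in $t$) yields $\psi=-\sum_i a_i(x,t)\,\xi_i(F_t)$ with $a_i(0,t)=0$. Then $V_t:=-\sum_i a_i(\cdot,t)\xi_i$ is a time-dependent germ of vector field tangent to $X$ with $j^1V_t=0$; integrating $\dot\theta_t=V_t\circ\theta_t$, $\theta_0=\mathrm{id}$, produces diffeomorphism germs of $(\R^3,0)$ with $\theta_t(X)=X$, and $\tfrac{d}{dt}(F_t\circ\theta_t)=(\partial_tF_t+V_t(F_t))\circ\theta_t=0$ forces $g\circ\theta_1=F_1\circ\theta_1=f$. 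Thus $g$ is $\mathcal R_1(X)$-equivalent, hence $\mathcal R(X)$-equivalent, to $f$, so $f$ is $k$-$\mathcal R(X)$-determined.

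One could alternatively bypass the homotopy and iterate Proposition \ref{prop:completeTrans}: the hypothesis says the complete $(k+1)$-transversal of $f$ is trivial, so $g$ with $j^kg=j^kf$ is $\mathcal R_1(X)$-equivalent to $f+\phi$ with $\phi\in\mathcal M_3^{k+2}$; multiplying the hypothesis by $\mathcal M_3$ and using $\Theta_1(X){\cdot}\mathcal M_3^{m}\subset\mathcal M_3^{m+1}$ propagates it to $\mathcal M_3^{m}\subset L\mathcal R_1(X){\cdot}f+\mathcal M_3^{m+1}$ for all $m\ge k+1$, so all higher transversals vanish, the reduction iterates, and a convergence argument for the composed diffeomorphisms finishes. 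In either route I expect the genuinely delicate point to be the uniformity in the homotopy parameter — carrying Nakayama (or the transversal reduction) through with coefficients depending smoothly on $t$ — together with checking that the resulting time-dependent vector field really is tangent to $X$ and integrates to a diffeomorphism germ fixing the origin; the purely algebraic inputs ($P=\mathcal M_3P$ and $\xi_i(\mathcal M_3^{k+1})\subset\mathcal M_3^{k+1}$) are routine.
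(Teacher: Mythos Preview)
Your argument is correct in substance. The paper gives no proof of this corollary at all; it is stated immediately after Proposition~\ref{prop:completeTrans} and treated as a standard consequence of the complete-transversal machinery (in the spirit of \cite{brucewest,bkd,damon}). Your second route---iterating the complete transversal after observing that the hypothesis propagates to all higher degrees---is the one closest to the paper's implicit framing, while your main argument supplies the full Mather homotopy that underlies such determinacy statements.

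One trivial slip: with $\psi=-\sum_i a_i\,\xi_i(F_t)$ you should take $V_t=\sum_i a_i\,\xi_i$ (not $-\sum$) so that $V_t(F_t)=-\psi=-\partial_tF_t$; as written, $\partial_tF_t+V_t(F_t)=2\psi$. This is a sign bookkeeping error and does not affect the argument. Your identification of the delicate point---carrying Nakayama through with smooth dependence on $t$ so that the $a_i$ can be chosen smoothly in $(x,t)$---is accurate; this is standard (e.g.\ via Malgrange preparation or a compactness/partition-of-unity patching over $t\in[0,1]$), and once granted, the integration of $V_t\in\Theta_1(X)$ to $X$-preserving, origin-fixing diffeomorphisms is routine.
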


We also need the following  result about trivial families.

 \begin{prop}{\rm (\cite{brucewest})} \label{prop:trivialfam}
Let $F:\mathbb R^3\times \mathbb R,(0,0)\to \mathbb R,0$ be a smooth
family of functions with $F(0,t)=0$ for $t$ small. Let
$\xi_1,\ldots,\xi_p$ be vector fields in $\Theta(X)$ vanishing at
$0\in \mathbb R^3$. Then the family $F$ is $k-\mathcal R(X)$-trivial
if $\frac{\partial F}{\partial t}\in \mathcal
E_{4}.\{\xi_1(F),\ldots,\xi_p(F)\} +\mathcal M^{k+1}_3. $
\end{prop}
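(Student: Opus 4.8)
The plan is to prove Proposition~\ref{prop:trivialfam} by the standard homotopy (Thom--Levine) method, adapted so that the trivialising diffeomorphisms respect the singular set $X$. To say that $F$ is $k$-$\mathcal R(X)$-trivial is to produce a smooth one-parameter family of diffeomorphism germs $\Psi_t:(\R^3,0)\to(\R^3,0)$ with $\Psi_0=\mathrm{id}$, each satisfying $\Psi_t(X)=X$, such that $F(\Psi_t(x),t)-F(x,0)\in\mathcal M_3^{k+1}$ for all small $t$ (the hypothesis $F(0,t)=0$ keeps every member of the family in $\mathcal M_3$, which is the natural setting for $\mathcal R(X)$-equivalence of function germs based at the origin). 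I would seek $\Psi_t$ as the flow of a time-dependent vector field, so that differentiating the trivialising equation in $t$ reduces the problem to the infinitesimal equation: find a $t$-dependent field $V_t\in\Theta(X)$ with $V_t(0)=0$ such that $\partial F/\partial t+V_t(F)\in\mathcal M_3^{k+1}$.

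The construction of $V_t$ comes directly from the hypothesis. Writing $\partial F/\partial t=\sum_{i=1}^p a_i\,\xi_i(F)+R$ with $a_i\in\mathcal E_4$ (smooth in $(x,t)$) and $R\in\mathcal M_3^{k+1}$, I set $V_t=-\sum_{i=1}^p a_i(\,\cdot\,,t)\,\xi_i$. Since $\Theta(X)$ is an $\mathcal E_3$-module and each $\xi_i\in\Theta(X)$, this $V_t$ lies in $\Theta(X)$ for each fixed $t$; and because every $\xi_i$ vanishes at $0$, so does $V_t$. By construction $\partial F/\partial t+V_t(F)=R\in\mathcal M_3^{k+1}$, so the infinitesimal equation is solved.

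I would then integrate $V_t$. As $V_t$ is smooth in $(x,t)$ and vanishes at the origin, its flow $\Psi_t$ is a well-defined germ of diffeomorphism fixing $0$ for $t$ in the parameter interval; tangency of $V_t$ to $X$ forces $\Psi_t(X)=X$, so $\Psi_t$ realises an $\mathcal R(X)$-deformation. Computing the total $t$-derivative gives $\frac{d}{dt}F(\Psi_t(x),t)=\bigl(\partial F/\partial t+V_t(F)\bigr)(\Psi_t(x),t)=R(\Psi_t(x),t)$, and since $\Psi_t$ fixes the origin it preserves the filtration by powers of $\mathcal M_3$, whence $R(\Psi_t(\,\cdot\,),t)\in\mathcal M_3^{k+1}$. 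Integrating from $0$ to $t$ then yields $F(\Psi_t(x),t)-F(x,0)\in\mathcal M_3^{k+1}$, which is exactly $k$-$\mathcal R(X)$-triviality.

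The main technical point—the step I would handle most carefully—is the passage from the infinitesimal solution to the finite trivialisation. I must verify that the flow of the time-dependent field $V_t$ exists as a genuine germ of diffeomorphism on a common neighbourhood of $0$ for all relevant $t$, that its preservation of $X$ is exact (a consequence of $V_t$ being tangent to $X$ for every $t$, not merely up to order $k$), and that the remainder $R$, after precomposition with $\Psi_t$ and integration in $t$, stays in $\mathcal M_3^{k+1}$ so that the $k$-jet is genuinely trivialised. These are the usual existence and invariance checks of the homotopy method, made slightly more delicate here by the requirement that the trivialising diffeomorphisms lie in $\mathcal R(X)$.
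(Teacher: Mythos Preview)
The paper does not prove this proposition at all: it is simply quoted from \cite{brucewest} (and ultimately goes back to Damon's general triviality machinery in \cite{damon}), so there is no in-paper argument to compare against. Your proof via the Thom--Levine homotopy method is correct and is exactly the standard argument underlying such results; the key points --- that $V_t=-\sum a_i(\cdot,t)\xi_i$ lies in $\Theta(X)$ because $\Theta(X)$ is an $\mathcal E_3$-module, that it vanishes at $0$ because each $\xi_i$ does, and that the flow therefore fixes $0$ and preserves both $X$ and the filtration by powers of $\mathcal M_3$ --- are all identified correctly. One small point of presentation: the remainder $R$ should be understood as an element of $\mathcal M_3^{k+1}\cdot\mathcal E_4$ (i.e.\ lying in $\mathcal M_3^{k+1}$ uniformly in $t$), matching the abuse of notation already present in the statement; you implicitly use this when integrating $R(\Psi_t(\cdot),t)$ in $t$ and concluding the result stays in $\mathcal M_3^{k+1}$.
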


Two families of germs of functions $F$ and $G:(\mathbb R^3 \times
\mathbb R^l,(0,0))\to (\mathbb{R},0)$  are
\mbox{$P$-$\mathcal{R}^+(X)$}-equivalent if there exist a germ of a
diffeomorphism $\Phi:(\mathbb R^3 \times \mathbb R^l,(0,0)) \to
(\mathbb R^3 \times \mathbb R^l,(0,0))$ preserving $(X\times \mathbb
R^l,(0,0))$ and of the form $\Phi(x,u)=(\alpha(x,u),\psi(u))$ and a
germ of a function $c:(\mathbb R^l,0) \to\mathbb R$ such that $
G(x,u) = F(\Phi(x,u)) + c(u). $

A family $F$ is said to be an $\mathcal{R}^+(X)$-versal deformation
of $F_0(x)=F(x,0)$ if any other deformation $G$ of $F_0$ can be
written in the form  $G(x,u) = F(\Phi(x,u)) + c(u)$ for some germs
of smooth mappings $\Phi$ and $c$ as above with $\Phi$ not
necessarily a germ of diffeomorphism.

Given a family of germs of functions $F$, we write
$\dot{F}_i(x)=\frac{\partial F}{\partial u_i}(x,0).$

\begin{prop}\label{theo:InfcondUnfFunc}
A deformation $F:(\mathbb R^3 \times \mathbb R^l,(0,0))\to
(\mathbb{R},0)$ of a germ of a function $f$ on $X$ is
$\mathcal{R}^+(X)$-versal if and only if
$$
L\mathcal R_e(X)\cdot{}f + \mathbb R.\left\{1,
\dot{F}_1,\ldots,\dot{F}_l\right\}= \mathcal{E}_3.
$$
\end{prop}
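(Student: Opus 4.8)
The plan is to prove this versality criterion along the classical lines of the versality theorem for geometric subgroups of $\mathcal{R}^+$, adapting the argument of Bruce--West for functions on the cross-cap (see \cite{brucewest}) to the group $\mathcal{R}^+(X)$ at hand. The two ingredients already in place are exactly what is needed: $\Theta(X)$ is a finitely generated $\mathcal{E}_3$-module (Proposition~\ref{prop:genO(X)}) and the trivial families criterion of Proposition~\ref{prop:trivialfam}. For the necessity of the infinitesimal condition I would test versality against arbitrary one-parameter deformations. Fix $g\in\mathcal{E}_3$ and set $G(x,s)=f(x)+s\,g(x)$; versality provides $\Phi(x,s)=(\alpha(x,s),\psi(s))$ preserving $X\times\mathbb{R}^l$ with $\alpha(\cdot,0)=\mathrm{id}$, together with a function $c(s)$, such that $G(x,s)=F(\alpha(x,s),\psi(s))+c(s)$. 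Differentiating in $s$ at $s=0$ and using $F(\cdot,0)=f$ gives
$$
g(x)=\delta(f)(x)+\sum_{j=1}^{l}\psi_j'(0)\,\dot F_j(x)+c'(0),
$$
where $\delta=\sum_i\tfrac{\partial\alpha_i}{\partial s}(x,0)\,\tfrac{\partial}{\partial x_i}$ is tangent to $X$ because $\alpha(\cdot,s)$ preserves $X$, hence $\delta\in\Theta(X)$ and $\delta(f)\in L\mathcal{R}_e(X)\cdot f$. Since $g$ is arbitrary, $\mathcal{E}_3=L\mathcal{R}_e(X)\cdot f+\mathbb{R}\{1,\dot F_1,\dots,\dot F_l\}$.

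For the converse I would use the standard combined-deformation reduction. Given an arbitrary deformation $G(x,u)$, $u\in\mathbb{R}^{l'}$, of $f$, put $H(x,u,v)=F(x,v)+G(x,u)-f(x)$, a deformation of $F$ with the extra parameters $u$ and with $H|_{v=0}=G$. It then suffices to show that $H$, viewed as a deformation of $F$, is $\mathcal{R}^+(X)$-trivial in the $u$-directions: evaluating the resulting trivialization at $v=0$ exhibits $G$ as induced from $F$, which is versality. Triviality in each $u_i$-direction is obtained by integrating a family of vector fields tangent to $X$, and the infinitesimal condition to be verified is that $\partial H/\partial u_i$ lies in the $\mathcal{E}$-module generated by $\xi_1(H),\dots,\xi_4(H)$ (the generators of Proposition~\ref{prop:genO(X)} acting on the $x$-variables) together with the $v$-derivatives $\partial H/\partial v_1,\dots,\partial H/\partial v_l$ and the constants. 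At $u=v=0$ this reads $\partial_{u_i}H=\partial_{u_i}G(\cdot,0)\in\mathcal{E}_3=L\mathcal{R}_e(X)\cdot f+\mathbb{R}\{1,\dot F_j\}$, which holds by hypothesis.

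The hard part will be the last step of the converse: propagating this pointwise infinitesimal versality condition over the whole parameter $v$, so that it becomes the relative infinitesimal triviality of $H$, and then carrying out the integration. This is where the finite generation of $\Theta(X)$ and the Malgrange preparation theorem / Nakayama's lemma enter in the usual way—the generation condition defining versality is open—after which one applies a parametrized version of Proposition~\ref{prop:trivialfam}. This step is technically the heaviest but entirely routine within the geometric-subgroup framework; some care is also needed throughout to keep track of the additive constants coming from the functions $c(u)$, which are responsible for the $\mathbb{R}\cdot 1$ summand appearing in the statement.
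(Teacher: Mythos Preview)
The paper does not supply a proof of this proposition: it is stated as a standard versality criterion and then used, with the general theory of geometric subgroups (Damon \cite{damon}) and the Bruce--West paper \cite{brucewest} available in the bibliography as the implicit justification. So there is no ``paper's own proof'' to compare against.

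Your sketch is the correct standard argument and would furnish a proof. The necessity direction is exactly right. For sufficiency, your combined-deformation $H=F+G-f$ and the reduction to $u$-triviality are the right moves; the passage from the pointwise condition at $(u,v)=(0,0)$ to a neighbourhood is indeed the place where finite generation of $\Theta(X)$ and the preparation theorem (equivalently, Nakayama over $\mathcal{E}_3$) enter, and you flag this accurately. One small point worth tightening when you write it out in full: the integration step requires a \emph{parametrized} version of Proposition~\ref{prop:trivialfam} in which the constants are allowed to depend on parameters (this is what produces the function $c(u)$), and you should check that the vector fields you integrate can be chosen to vanish at the origin in $x$ so that the resulting diffeomorphisms fix $0\in\mathbb{R}^3$. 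Both are routine but should be stated rather than left implicit.
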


We can now state the result about the  $\mathcal
R(X)$-classification of germs of submersions.

\begin{theo} \label{theo:Classification}
Let $X$ be the germ of the $\mathcal A$-model of the cuspidal
cross-cap parametrised by $f(x,y,z)=(x,y^2,xy^3)$. Denote by
$(u,v,w)$ the coordinates in the target. Then any $\mathcal
R(X)$-finitely determined germ of a submersion in $\mathcal M_3$
with $\mathcal R(X)$-codimension $\le 2$ (of the stratum in the
presence of moduli) is $\mathcal R(X)$-equivalent to one of the
germs in {\rm Table \ref{tab:germsubm}}.

\begin{table}[ht]
\caption{Germs of submersions in $\mathcal M_3$ of $\mathcal
R(X)$-codimension $\le 2$.}
\begin{center}
{\begin{tabular}{lcl}
\hline
Normal form & $d(f,\mathcal{R}(X))$ &$\mathcal{R}^+(X)$-versal deformation\\
\hline
$u\pm v$ & $0$ &$u\pm v$\\
$u\pm v^2$ & $1$&$u\pm v^2+a_1v$\\
$u\pm v^3$& $2$&$u\pm v^3+a_1v+a_2v^2$\\
$\pm v\pm u^2$& $1$&$\pm v\pm u^2+a_1u$\\
$\pm v+ u^3$& $2$&$\pm v+u^3+a_1u+a_2u^2$\\
$w\pm u^2+buv+cu^2$, $c\ne 0,\frac{b^2}{4}$ &$2^{(*)}$&
$w\pm u^2+buv+cu^2+a_1u+a_2v$\\
\hline
\end{tabular}
}
\\
{\footnotesize $(*)$: $b,c$ are moduli and the codimension is that
of the stratum.}
\end{center}
\label{tab:germsubm}
\end{table}

\end{theo}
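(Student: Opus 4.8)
The plan is to carry out the classification inductively on the jet level, using the complete transversal method (Proposition \ref{prop:completeTrans}) adapted to the group $\mathcal R(X)$, exactly as in the analogous classifications for the cross-cap in \cite{brucewest} and for the cuspidal edge in \cite{osettari2}. First I would record the action of the generators $\xi_1,\xi_2,\xi_3,\xi_4$ of $\Theta(X)$ (from Proposition \ref{prop:genO(X)}) and of the submodule $\Theta_1(X)=\mathcal M_3\cdot\{\xi_1,\xi_2,\xi_3,\xi_4\}$ on a general polynomial germ, so that the tangent space $L\mathcal R_1(X)\cdot f=\Theta_1(X)\cdot f$ can be computed jet by jet. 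Since we only want submersions, $f\in\mathcal M_3\setminus\mathcal M_3^2$, so $j^1f$ is (up to a linear change of coordinates preserving $X$, i.e. acting on the $1$-jet) one of the three normal forms $u$, $v$, or $w$; the three branches of the classification correspond to these three cases. For the degenerate directions one then passes to the $2$-jet: a complete $2$-transversal computation shows that $u$ reduces to $u\pm v^2$ or $u$ (degenerate), $v$ reduces to $\pm v\pm u^2$ or $\pm v$ (degenerate), and $w$ reduces to $w\pm u^2+buv+cu^2$ or degenerate sub-cases. One then iterates: the germs $u\pm v^2$ and $\pm v\pm u^2$ are $2$-determined by Corollary \ref{cor:detrminacy} (check $\mathcal M_3^3\subset L\mathcal R_1(X)\cdot f+\mathcal M_3^4$), while in the more degenerate cases one computes the $3$-transversal and finds $u\pm v^3$, $\pm v+u^3$, with the next stratum having codimension $>2$ and hence being cut off.

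Second, for each surviving normal form I would compute the $\mathcal R(X)$-codimension $d(f,\mathcal R(X))=\dim_{\mathbb R}\mathcal M_3/L\mathcal R(X)\cdot f$ using the full module $\Theta(X)\cdot f$ (which includes the Euler field, giving one more dimension of tangent space than $\Theta_1(X)\cdot f$), to obtain the values $0,1,2,1,2,2$ listed in Table \ref{tab:germsubm}; this is a finite-dimensional linear algebra computation in low-degree jets. The entry $w\pm u^2+buv+cu^2$ is special because $b,c$ are moduli: here the orbit is not a single point but moves in a $2$-parameter family, and the ``codimension $2$'' refers to the codimension of the stratum (the union of orbits as $b,c$ vary), with the conditions $c\ne0,\frac{b^2}{4}$ ensuring we are on the generic, finitely-determined stratum (these conditions are what make the quadratic form $\pm u^2+buv+cu^2$ together with the $X$-geometry nondegenerate enough to be finitely determined — degeneracies $c=0$ or $c=b^2/4$ push into higher codimension or non-finite determinacy). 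Third, for each normal form I would exhibit the claimed $\mathcal R^+(X)$-versal deformation by verifying the infinitesimal versality criterion of Proposition \ref{theo:InfcondUnfFunc}, namely that $L\mathcal R_e(X)\cdot f+\mathbb R\cdot\{1,\dot F_1,\dots,\dot F_l\}=\mathcal E_3$; since the $\dot F_i$ are taken to be a basis of a complement to $L\mathcal R(X)\cdot f+\mathbb R$ in $\mathcal M_3$ (plus the constant $1$), this is immediate once the codimension count is done.

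The main obstacle I anticipate is the bookkeeping in the $w$-branch. Unlike the cross-cap case where the defining equation is quasihomogeneous of a simpler weight, here $h=w^2-u^2v^3$ has weights $(3,2,4)$ (for $u,v,w$) of degree $8$, and the vector field $\xi_3=2w\,\partial_v+3u^2v^2\,\partial_w$ and $\xi_4=w\,\partial_u+uv^3\,\partial_w$ mix the variables in a way that makes the low-degree pieces of $\Theta_1(X)\cdot f$ harder to control; one must be careful that the complete transversal at each stage is genuinely complete and that no unexpected reductions (or obstructions to reduction) occur. In particular, verifying that the moduli $b,c$ genuinely cannot be removed — i.e. that the $2$-jet $w\pm u^2+buv+cu^2$ is not $\mathcal R(X)$-equivalent to one with fewer parameters — requires identifying the relevant modular invariants, and checking finite determinacy of a representative on the generic stratum requires pushing the $\mathcal M_3^{k+1}\subset L\mathcal R_1(X)\cdot f+\mathcal M_3^{k+2}$ test to high enough $k$. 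The other branches ($u$-branch and $v$-branch) are essentially the $\mathcal R(X)$-analogues of the $A_k$ classification restricted by the singular surface and should go through routinely; the degenerate strata there are discarded by the codimension $\le2$ cutoff, which I would confirm by exhibiting that the next germ in each chain ($u\pm v^4$, $\pm v+u^4$, etc.) has codimension $3$.
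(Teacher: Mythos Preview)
Your overall approach is exactly that of the paper (complete transversal inductively on jets, determinacy via Corollary~\ref{cor:detrminacy}, versality via Proposition~\ref{theo:InfcondUnfFunc}), and the $u$-, $v$- and $w$-branches are handled essentially as you describe. There is, however, a concrete error at the very first step: your claim that the $1$-jet orbits are $u$, $v$, $w$ is wrong. Integrating the $1$-jets of $\xi_1,\dots,\xi_4$ gives only the diagonal scalings $(u,v,w)\mapsto(e^{3\alpha}u,e^{-2\alpha}v,w)$, $(u,v,w)\mapsto(u,e^{2\alpha}v,e^{3\alpha}w)$ and the shears $(u,v,w)\mapsto(u,v+\alpha w,w)$, $(u,v,w)\mapsto(u+\alpha w,v,w)$, together with the sign changes $u\mapsto -u$ and $w\mapsto -w$. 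None of these mixes $u$ with $v$, so a $1$-jet $au+bv$ with $ab\ne0$ cannot be reduced to either $u$ or $v$; it is a separate orbit $u\pm v$. Hence there are \emph{four} $1$-jet orbits, $u$, $u\pm v$, $\pm v$, $w$, and the missing one $u\pm v$ is precisely the codimension-$0$ generic entry in Table~\ref{tab:germsubm}. Your three-branch outline would therefore omit the generic case and misattribute where the higher strata sit.

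Two minor points. First, the quasihomogeneous weights of $h=w^2-u^2v^3$ are $(1,2,4)$ for $(u,v,w)$ (Euler field $u\,\partial_u+2v\,\partial_v+4w\,\partial_w$), not $(3,2,4)$; both $w^2$ and $u^2v^3$ then have weighted degree~$8$. Second, in the $w$-branch the paper does not merely check $\mathcal M_3^{k+1}\subset L\mathcal R_1(X)\cdot g+\mathcal M_3^{k+2}$ at the $2$-jet level: the complete $3$-transversal of $w\pm u^2+buv+cv^2$ is nonempty (it contains $uv^2$ and $v^3$), and one must invoke Proposition~\ref{prop:trivialfam} to show the resulting family is trivial in those directions before concluding $3$-determinacy under $c\ne0,\,b^2/4$. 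Your sketch of the $w$-branch is otherwise on target.
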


\begin{proof} The linear changes of coordinates in $\mathcal R(X)$ obtained
by integrating the 1-jets of the vector fields in $\Theta(X)$ are
$$
\begin{array}{cc}
\begin{array}{rcl}
\eta_1(u,v,w)&=&(e^{3\alpha}u,e^{-2\alpha}v,w)\\
\eta_2(u,v,w)&=&(u,e^{2\alpha}v,e^{3\alpha}w)\\
\eta_3(u,v,w)&=&(u,v+\alpha w,w)\\
\eta_4(u,v,w)&=&(u+\alpha w,v,w)
\end{array},
&\alpha,\beta\in \mathbb R
\end{array}
$$


Consider a non-zero 1-jet $g=au+bv+cw$ (we are interested in
submersions). If $a\neq 0$, by $\eta_4$ we can make $c=0$. If $a=0$
and $b\neq 0$, we use $\eta_3$ to set $c=0$. We can also use the changes $(u,v,w)\mapsto(-u,v,w)$ and $(u,v,w)\mapsto(u,v,-w)$, which preserve $X$, so the orbits in the 1-jet space are $u,u\pm
v,\pm v,w$.

\smallskip
$\bullet$ Consider the 1-jet $g=u\pm v$. Then $\xi_1(g)=3u\mp2v$,
$\xi_2(g)=\pm2v$,
 $\xi_3(g)=2w$ and $\xi_4(g)=w$, so it is
1-determined and has codimension 0.

\smallskip
$\bullet$ Consider the 1-jet $g=u$. Then $\xi_1(g)=3u$,
$\xi_2(g)=0$, $\xi_3(g)=0$ and $\xi_4(g)=w$, so $\mathcal
M_{3}^l\subset L\mathcal R_1(X)\cdot{}g + sp\{v^l\} + \mathcal
M_{3}^{l+1}$, that is, a complete $l$-transversal is given by
$g=u+av^l$ ($l\ge 2$). Using $\eta_1$ and multiplication by
constants we can fix $a=\pm 1$ We have
$$
\begin{array}{rcl}
\xi_1(g)&=&3u\mp 2lv^l,\\
\xi_2(g)&=&\pm 2v^l,\\
\xi_3(g)&=&2lv^{l-1}w\\
\xi_4(g)&=&w.
\end{array}
$$

Now $\mathcal M_{3}^{l+1}\subset L\mathcal R_1(X)\cdot{}g + \mathcal
M_{3}^{l+2}$, so $u\pm v^l$ is $l$-determined and has codimension
$l-1$.

\smallskip
$\bullet$ Consider the 1-jet $g=\pm v$. Then $\xi_1(g)=-2v$,
$\xi_2(g)=2v$, $\xi_3(g)=2w$ and $\xi_4(g)=0$, so $\mathcal
M_{3}^l\subset L\mathcal R_1(X)\cdot{}g + sp\{u^l\} + \mathcal
M_{3}^{l+1}$, that is, a complete $l$-transversal is given by
$g=v+au^l$ ($l\ge 2$). Using $\eta_2$ and multiplication by
constants we can fix $a=\pm 1$ We have
$$
\begin{array}{rcl}
\xi_1(g)&=&\pm3lu^l\mp 2v,\\
\xi_2(g)&=&\pm 2v,\\
\xi_3(g)&=&\pm 2w\\
\xi_4(g)&=&\pm lwu^{l-1}.
\end{array}
$$

Now $\mathcal M_{3}^{l+1}\subset L\mathcal R_1(X)\cdot{}g + \mathcal
M_{3}^{l+2}$, so $\pm v\pm u^l$ is $l$-determined and has codimension
$l-1$.

\smallskip
$\bullet$ Consider the 1-jet $g=w$. Then $\xi_1(g)=0$,
$\xi_2(g)=3w$, $\xi_3(g)=3u^2v^2$ and $\xi_4(g)=uv^3$, and so a
complete 2-transversal is $g=w+au^2+buv+cv^2$. Applying Proposition
\ref{prop:trivialfam} does not show whether $g$ is trivial seen as a
family with parameter $a,b$ or $c$. If $a\neq 0$, chose $\alpha$
such that $ae^{6\alpha}=\pm 1$, then using $\eta_1$ we get
$g=w\pm u^2+b'uv+c'v^2$. Set $b'=b$ and $c'=c$.

Consider the 2-jet $g=w\pm u^2+buv+cv^2$, we have $\mathcal
M_3^3\subset L\mathcal R_1(X)\cdot{}g + sp\{uv^2,v^3\} + \mathcal
M_{3}^{4}$, that is, a complete $3$-transversal is given by
$g=w+u^2+buv+cv^2+duv^2+ev^3$.

Consider $g$ as a 1-parameter family parametrised by $d$. Now,
\begin{align*}
&\langle\xi_1(g),\xi_2(g),\xi_3(g),\xi_4(g)\rangle\\
=&\langle
6u^2+buv-4cv^2-uv^2-6ev^3, 2buv+4cv^2+4duv^2+6ev^3+3w,\\
 &\hspace{10mm}
2buw+4cvw+4duvw+6ev^2w+3u^2v^2, 2uw+bvw+dv^2w+uv^3\rangle.
\end{align*}
From
$\xi_2(g)$ we get everything of order 3 with $w$ mod $\mathcal
M_3^4$. From $\xi_3(g)$ and $\xi_4(g)$, if $c\neq \frac{b^2}{4},0$
we get $uw$ and $vw$. Now, from $u\xi_1(g)$, $v\xi_1(g)$,
$u\xi_2(g)$ and $v\xi_2(g)$ we get (again if $c\neq
\frac{b^2}{4},0$) $u^3,u^2v,uv^2$ and $v^3$. This means that $g$ is
trivial along $d$ and we can set $d=0$. Similarly, since $d\neq 0$
was not a condition for the previous calculations, $g$ is trivial
along $e$ too. In fact, for $g=w+u^2+buv+cv^2$, it can be seen that
if $c\neq \frac{b^2}{4},0$ then $\mathcal M_3^4\subset L\mathcal
R_1(X)\cdot{}g  + \mathcal M_{3}^{5}$, which means that $g$ is
3-determined. It has codimension 4 and the normal space is generated
by $u,v,uv$ and $v^2$.
\end{proof}

\subsection{Height functions: geometrical interpretations}\label{sec:geomint}

We consider the general form of the cuspidal cross-cap constructed
from a cuspidal edge through a folding map.
Let
$$
\phi(x,y)=\left(
x,a(x)+\frac{y^2}{2},(b_0(x)+b_1(x)y^2+b_2(x)y^3+b_3(x,y)y^4)^2\right),$$
be a cuspidal cross-cap obtained by \eqref{eq:folding}
with $a(0)=a'(0)=b_0(0)=0$ and $b_0'(0)b_2(0)\neq 0$.

Recall that the cuspidal edge $\Sigma$ is given by
$\phi(x,0)=(x,a(x),b_0(x)^2)$ and the osculating plane of $\Sigma$
at the origin is the plane orthogonal to the vector
$(0,-2b_0'(0)^2,a''(0))$. We remark that since
$\kappa_{\nu}^{g\circ\phi}(0)=b_0'(0)\neq 0$, the osculating plane
can never coincide with the tangent cone. Recall too that the
torsion is given by
$$\tau_{\Sigma}(0)=\frac{2b_0'(0)(3a''(0)b_0''(0)-a'''(0)b_0'(0))}{a_0''(0)^2+4b_0'(0)^4}.$$

The double point curve is $\widehat{d}(y)=\phi(d(y),y)$ as in the
setting of Section 2. It is a singular curve, the limiting tangent
vector can be considered to be $\widehat{d}''(0)$ and the osculating
plane is generated by $\widehat{d}''(0)$ and
$\widehat{d}^{(4)}(0)$, as in \eqref{eq:dhatpp} and \eqref{eq:dhatpppp}.

The family of height functions $H:M\times S^2\to \mathbb R$ on $M$
is given by $H((x,y),{\bf v})=H_{{\bf v}}(x,y)=\phi(x,y)\cdot {\bf
v}.$ The height function $H_{{\bf v}}$ on $M$ along a fixed
direction ${{\bf v}}$ measures the contact of $M$ at $p$ with the
plane $\pi_{{\bf v}}$ through $p$ and orthogonal to ${{\bf v}}$. The
contact of $M$ with  $\pi_{{\bf v}}$  is described by that of the
fibre $g=0$ with the model cuspidal cross-cap $X$, with $g$ as in
Theorem \ref{theo:Classification}. Following the transversality
theorem in the Appendix of \cite{brucewest}, for a generic cuspidal
cross-cap, the height functions $H_{{\bf v}}$, for any ${{\bf v}}\in
S^2$, can only have singularities of $\mathcal R(X)$-codimension
$\le 2$ (of the stratum) at any point on the cuspidal edge.

We shall take $M$ parametrised by $\phi$ and write ${\bf
v}=(v_1,v_2,v_3)$. Then,
\begin{align*}
&H_{{\bf v}}(x,y)=H((x,y),{\bf v})\\
=&xv_1+\left(a(x)+\frac{1}{2}y^2\right)v_2+(b_0(x)+b_1(x)y^2+b_2(x)y^3+b_3(x,y)y^4)^2v_3.
\end{align*}

The function $H_{{\bf v}}$  is singular at the origin if and only if
$v_1=0$, that is, if and only if the plane $\pi_{{\bf v}}$ contains
the tangential direction to $M$ at the origin.

If the plane $\pi_{{\bf v}}$ is transversal to both the cuspidal
edge and the double point curve, then the contact of $\pi_{{\bf v}}$
with $M$ is described by the contact of the zero fibre of $g=u\pm v$
with the model cuspidal cross-cap $X$.

\begin{prop}
The height function along the double point curve $H_{{\bf
v}}(d(y),y)=\widehat{d}(y)\cdot{\bf v}$ can have
$A_j^{\pm}$-singularities, $j=1,3,5$, which are modeled by the
contact of the zero fibre of the submersions $u\pm v^k$, $k=1,2,3$
resp., with the model cuspidal cross-cap $X$ (i.e. modeled by the composition of the submersions with the parametrisation of the model cuspidal cross-cap along the double point curve). The geometrical
interpretations are as follows:
$$
\begin{array}{rl}
A_1^{\pm}:&\pi_{{\bf v}} \mbox{ \rm is not a tangent plane of }\widehat{d}(y);\\
A_3^{\pm}:&\pi_{{\bf v}} \mbox{ \rm is not the osculating plane of }\widehat{d}(y),\, \tau_{sing}(\tilde d)(0)\ne 0;\\
A_5^{\pm}:&\pi_{{\bf v}} \mbox{ \rm is the osculating plane of
}\widehat{d}(y),\, \tau_{sing}(\tilde d)(0)=0.
\end{array}
$$
\end{prop}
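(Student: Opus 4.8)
The plan is to compute the restriction of the height function to the double point curve, $H_{\mathbf v}(d(y),y)=\widehat d(y)\cdot\mathbf v$, as a function of a single variable $y$, and to read off its $\mathcal A$-type (i.e.\ its $A_j$-singularity type as a function-germ $(\mathbb R,0)\to(\mathbb R,0)$) in terms of the geometry of the curve $\widehat d$. Since $\phi$ is a cuspidal cross-cap obtained by folding, we already have from Section~2 that $\widehat d'(0)=\widehat d'''(0)=0$ while $\widehat d''(0)$ and $\widehat d^{(4)}(0)$ are linearly independent (equations \eqref{eq:dhatpp}--\eqref{eq:dhatpppp}), and that the associated curve $\widetilde d$ on the cuspidal edge is of $(2,3)$-type. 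First I would expand $\widehat d(y)\cdot\mathbf v$ as a Taylor series in $y$: because $\widehat d$ has vanishing first and third derivatives at $0$, the expansion has the form $c_2y^2+c_4y^4+c_5y^5+c_6y^6+\cdots$ with $c_j=\tfrac{1}{j!}\widehat d^{(j)}(0)\cdot\mathbf v$, where now $\widehat d^{(5)}(0),\widehat d^{(6)}(0)$ must also be computed (or taken from the computations underlying the preceding Proposition, where l'Hôpital was applied six times, so these derivatives are already available).

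The next step is the case analysis on $\mathbf v$. If $c_2=\widehat d''(0)\cdot\mathbf v\ne0$ the germ is $A_1^\pm$; the condition $c_2\ne0$ says precisely that $\mathbf v$ is not orthogonal to the limiting tangent vector $\widehat d''(0)$, i.e.\ $\pi_{\mathbf v}$ is not a tangent plane of $\widehat d$. If $c_2=0$ but $c_4=\tfrac{1}{24}\widehat d^{(4)}(0)\cdot\mathbf v\ne0$: here $\mathbf v$ is orthogonal to $\widehat d''(0)$ but not to $\widehat d^{(4)}(0)$, so $\pi_{\mathbf v}$ is not the osculating plane; the germ is $c_4y^4+c_5y^5+\cdots$, which is $A_3^\pm$. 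I would check that $c_5$ does not obstruct this: since $c_2=0$ forces $\mathbf v\perp\widehat d''(0)$, and one can see from the structure of the folded parametrisation that $\widehat d^{(5)}(0)$ is proportional to $\widehat d''(0)$ — indeed $\widetilde d'''(0)=(0,0,6b_2(0))$ is a multiple of nothing relevant but the fold forces the odd-order third component to behave appropriately — hence $c_5=0$ automatically and the germ is genuinely $c_4y^4+O(y^6)\sim\pm y^4$, matching $u\pm v^2$ composed along the double point curve. (This is exactly the phenomenon that $\sigma_{sing}(\widetilde d)=0$, computed in Section~2, encodes.) Finally, if $c_2=c_4=0$, then $\mathbf v$ is orthogonal to both $\widehat d''(0)$ and $\widehat d^{(4)}(0)$, i.e.\ $\mathbf v$ is normal to the osculating plane, $\pi_{\mathbf v}$ is the osculating plane, and the germ is $c_5y^5+c_6y^6+\cdots$; I would verify that generically $c_5\ne0$ — this is where $\tau_{sing}(\widetilde d)$ enters — giving an $A_4$-type $y^5$ singularity; but since the ambient model and equivalence is $\mathcal R(X)$-contact with the cuspidal cross-cap, and the normal form table gives $u\pm v^3$ (whose composition along a $(2,3)$-type curve $v\mapsto(v^2,v^3)$-ish produces the relevant degenerate behaviour), this matches the listed $A_5^\pm$; the condition $\tau_{sing}(\widetilde d)(0)=0$ should be shown to be equivalent to the further vanishing that distinguishes this stratum.

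To connect these function-germ types with the submersion normal forms $u\pm v^k$ from Theorem~\ref{theo:Classification}, I would note that $H_{\mathbf v}\circ(\text{param. of }M)$ is the composition of a submersion $g$ with $\mathbf v$ as its linear part (after the linear change identifying $\mathbf v^\perp$ with a coordinate hyperplane) with the model $f(x,y)=(x,y^2,xy^3)$; restricting to the double point curve of $X$ — which, from the equations $x_1=x_2,\ y_1=-y_2$, is the curve $y\mapsto(0,y^2,0)$ in the $(u,v,w)$ target — the submersion $u\pm v^k$ pulls back to $\pm(y^2)^k=\pm y^{2k}$, i.e.\ to $A_{2k-1}$. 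So $k=1,2,3$ correspond to $A_1,A_3,A_5$ as claimed, and the geometric conditions above are precisely the openness/degeneracy conditions placing $g$ in the successive orbits $u\pm v$, $u\pm v^2$, $u\pm v^3$ of the table.

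The main obstacle I anticipate is the bookkeeping in the $A_3$ and $A_5$ cases: one must show that the "odd" coefficient $c_5$ vanishes whenever $c_2=0$ (so that $A_3$, not $A_2$, occurs on that stratum) and track exactly which combination of $b_0,b_1,b_2,b_3$ and their derivatives governs $c_5$ on the osculating-plane stratum, matching it to $\tau_{sing}(\widetilde d)$. This is a finite but delicate computation with $\widehat d^{(5)}(0)$ and $\widehat d^{(6)}(0)$; the symmetry of the fold map $g(X,Y,Z)=(X,Y,Z^2)$ — which forces $\widehat d$ to be an even-plus-correction curve in a suitable sense — is what makes the odd coefficients drop out, and making that precise is the crux.
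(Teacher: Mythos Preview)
Your overall strategy---expand $\widehat d(y)\cdot\mathbf v$ as a Taylor series in $y$ and read off the $A$-type from the first nonvanishing coefficient---is exactly the paper's approach. The connection with the $\mathcal R(X)$-normal forms via restricting $u\pm v^k$ to the model double point curve $y\mapsto(0,y^2,0)$ is also correct and matches the paper's logic.

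There is, however, a genuine confusion in your handling of the odd coefficients, and it propagates into a wrong analysis of the $A_5$ case and of where $\tau_{sing}$ enters. The key fact you are circling but not stating is that $\widehat d$ is an \emph{even} function of $y$. This is not merely a consequence of the fold symmetry in some vague sense: it follows directly from the definition of the double point curve. The double points of $\phi$ satisfy $x_1=x_2$, $y_1=-y_2$, and the defining equation $\phi_3(x,y)=\phi_3(x,-y)$ (equivalently $A=-B$ in the obvious notation) is even in $y$, so the implicit solution $x=x(y)$ is even; hence all three components of $\widehat d(y)=\phi(x(y),y)$ are even. Thus $c_3=c_5=c_7=\cdots=0$ identically, for every $\mathbf v$, with no further hypothesis. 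Your claim that ``$\widehat d^{(5)}(0)$ is proportional to $\widehat d''(0)$'' is not the right mechanism (it simply vanishes), and your statement that on the $c_2=c_4=0$ stratum ``generically $c_5\ne 0$ \ldots\ giving an $A_4$-type $y^5$ singularity'' is wrong: one always has $c_5=0$, so the germ is $c_6y^6+\cdots$, which is genuinely $A_5$, matching $u\pm v^3$.

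This also corrects your placement of $\tau_{sing}(\tilde d)$. It does not control $c_5$; it controls $c_4$. On the tangent-plane stratum $c_2=0$ one has $v_2=\tfrac{2b_1(0)}{b_0'(0)}v_1$, and substituting into $c_4=\tfrac{1}{24}\widehat d^{(4)}(0)\cdot\mathbf v$ (whose third component vanishes) gives $c_4=v_1\cdot(\text{factor})$, where the factor is, up to a nonzero constant, $\tau_{sing}(\tilde d)(0)$. So $c_4\ne 0$ precisely when $v_1\ne 0$ (i.e.\ $\pi_{\mathbf v}$ is not the osculating plane $Z=0$) \emph{and} $\tau_{sing}(\tilde d)(0)\ne 0$; this is the $A_3$ condition. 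The $A_5$ case is then $c_2=c_4=0$ and $c_6\ne 0$, and the paper checks $c_6$ via $\widehat d^{(6)}(0)\cdot\mathbf v$, whose third component is $720\,b_2(0)^2\ne 0$. Once you insert the evenness observation, your argument becomes essentially the paper's.
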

\begin{proof}

The height function along the double point curve $\widehat{d}(y)$ is
always singular.
\begin{align*}
&H_{{\bf v}}(d(y),y)\\
=&d(y)v_1+(a(d(y))+\frac{y^2}{2})v_2
+(b_0(d(y))+b_1(d(y))y^2\\
&+b_2(d(y))y^3+b_3(d(y),y)y^4)^2v_3
\end{align*}
It has an $A_1^{\pm}$-singularity if
$\frac{-b_1(0)}{b_0'(0)}v_1+\frac{1}{2}v_2\neq 0$. This happens when $\pi_{{\bf v}}$ is not a tangent plane to the double point curve and is also described by the case $g=u\pm v$ above.

If $\pi_{{\bf v}}$ is transversal to the cuspidal edge but contains
the limiting tangent vector to the double point curve then the
contact of $\pi_{{\bf v}}$ with $M$ is described by the contact of
the zero fibre of $g=u\pm v^k$, $k=2,3$, with the model cuspidal
cross-cap $X$.  For the height function to have an
$A_3^{\pm}$-singularity the coefficient for $y^4$ must be non-zero,
so
$$\frac{-b_1(0)}{b_0'(0)}v_1+\frac{1}{2}v_2=0
\text{    and    } \widehat{d}^{(4)}_1(0)v_1+\widehat{d}^{(4)}_2(0)v_2\neq 0.$$ This case is described by the case $k=2$.
The second equation means that $\pi_{{\bf v}}$ is not the osculating
plane and can be written as
$$\frac{-2b_3(0)b_0'(0)^2+2b_0'(0)b_1'(0)b_1(0)-b_1(0)^2b_0''(0))}{b_0'(0)^3}v_1+\frac{b_1(0)^2a_0''(0)}{b_0'(0)^2}v_2\neq 0.$$
Substituting $v_2=\frac{2b_1(0)}{b_0'(0)}v_1$ in this equation we
get
$v_1(\frac{1}{b_0'(0)}(2b_1(0)^3a_0''(0)-2b_3(0)b_0'(0)^2+2b_0'(0)b_i'(0)b_1(0)-b_1(0)^2b_0''(0)))\neq
0$, which means that $\tau_{sing}(\tilde d)(0)\neq 0$.

The height function has an $A_5^{\pm}$-singularity if
$$\frac{-b_1(0)}{b_0'(0)}v_1+\frac{1}{2}v_2=0,
\widehat{d}^{(4)}_1(0)v_1+\widehat{d}^{(4)}_2(0)v_2=0 $$
$$\text{    and    } \widehat{d}^{(6)}_1(0)v_1+\widehat{d}^{(6)}_2(0)v_2+720b_2(0)^2v_3\neq 0.$$ Here, $\pi_{{\bf v}}$ is the osculating plane and $\tau_{sing}(\tilde d)(0)=0$ and is described by the case $k=3$.
\end{proof}

\begin{prop}
The height function along the cuspidal edge curve $H_{{\bf v}}(x,0)$
can have $A_1^{\pm}$ or an $A_2$-singularity, which are modeled by
the contact of the zero fibre of the submersions $v\pm u^k$, $k=2,3$
resp., with the model cuspidal cross-cap $X$ (i.e. modeled by the composition of the submersions with the parametrisation of the model cuspidal cross-cap along the cuspidal edge). The geometrical
interpretations are as follows:
$$
\begin{array}{rl}
A_1^{\pm}:&\pi_{{\bf v}} \mbox{ \rm is not the osculating plane of }\Sigma;\\
A_2:&\pi_{{\bf v}} \mbox{ \rm is the osculating plane of
}\widehat{d}(y),\, \tau_{\Sigma}(0)\neq 0.
\end{array}
$$
\end{prop}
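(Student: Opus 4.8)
The plan is to follow the proof of the preceding proposition, now restricting the family of height functions to the cuspidal edge $\Sigma=\{y=0\}$ instead of to the double point curve, and then to read the geometry of $\Sigma$ off the resulting coefficient conditions. Along $\Sigma$ we have
$$H_{{\bf v}}(x,0)=xv_1+a(x)v_2+b_0(x)^2v_3,$$
which, by $a(0)=a'(0)=b_0(0)=0$, is singular at $0$ exactly when $v_1=0$, i.e.\ exactly when $\pi_{{\bf v}}$ contains the tangent line of $\Sigma$. If in addition $\pi_{{\bf v}}$ is not the tangent cone of $M$ (equivalently $v_2\neq0$), then a diffeomorphism carrying $M$ to the model $X$ sends the tangent line of $\Sigma$ to the $u$-axis and the tangent cone $\{Z=0\}$ to $\{w=0\}$, so the submersion defining $\pi_{{\bf v}}$ has $1$-jet $\pm v$ in the coordinates of $X$; by Theorem \ref{theo:Classification} its contact with $M$ is therefore modeled by one of the normal forms $v\pm u^k$, and the composition of such a normal form with $f(x,y)=(x,y^2,xy^3)$ restricted to $\{y=0\}$ is $\pm x^k$. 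Hence it suffices to determine the $A$-type of the one-variable germ $x\mapsto H_{{\bf v}}(x,0)$.

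Assuming $v_1=0$, expanding $a(x)=\tfrac12a''(0)x^2+\tfrac16a'''(0)x^3+O(x^4)$ and $b_0(x)^2=b_0'(0)^2x^2+b_0'(0)b_0''(0)x^3+O(x^4)$ gives
$$H_{{\bf v}}(x,0)=\Big(\tfrac12a''(0)v_2+b_0'(0)^2v_3\Big)x^2+\tfrac16\Big(a'''(0)v_2+6\,b_0'(0)b_0''(0)v_3\Big)x^3+O(x^4).$$
This germ has an $A_1^{\pm}$-singularity exactly when its $x^2$-coefficient is non-zero, i.e.\ $a''(0)v_2+2b_0'(0)^2v_3\neq0$, the sign being that of this coefficient; this is the case $k=2$. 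Since the osculating plane of $\Sigma$ at $0$ is orthogonal to $(0,-2b_0'(0)^2,a''(0))$ while $\pi_{{\bf v}}$ is orthogonal to $(0,v_2,v_3)$ (as $v_1=0$), the two planes coincide iff $(v_2,v_3)\parallel(-2b_0'(0)^2,a''(0))$ iff $a''(0)v_2+2b_0'(0)^2v_3=0$; hence the $A_1^{\pm}$-case says precisely that $\pi_{{\bf v}}$ is not the osculating plane of $\Sigma$ (which is well defined at $0$ because $\kappa_\Sigma(0)\ge2b_0'(0)^2>0$).

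For the $A_2$-case I would impose that the $x^2$-coefficient vanish (so that $\pi_{{\bf v}}$ is the osculating plane of $\Sigma$) and that the $x^3$-coefficient not vanish. Writing $(v_2,v_3)=\mu(-2b_0'(0)^2,a''(0))$ with $\mu\neq0$ (possible as ${\bf v}\neq0$ and $b_0'(0)\neq0$) and substituting, the $x^3$-coefficient becomes $\tfrac{\mu}{6}\cdot2b_0'(0)\big(3a''(0)b_0''(0)-a'''(0)b_0'(0)\big)$, a non-zero multiple of the numerator of $\tau_{\Sigma}(0)$ in \eqref{eq:ksigmatsigma}; since the denominator $a''(0)^2+4b_0'(0)^4$ there is positive, this coefficient is non-zero iff $\tau_{\Sigma}(0)\neq0$. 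This is the case $k=3$, which restricts along the cuspidal edge to $x^3$, i.e.\ an $A_2$. Finally, the transversality statement recalled before the proposition forces the submersion singularities to have $\mathcal R(X)$-codimension $\le2$, so among the $v\pm u^k$ only $v\pm u^2$ and $v+u^3$ occur, which are exactly the two cases listed. All of this is routine computation; the only point requiring some care is the identification sketched above of the relevant $\mathcal R(X)$-stratum with the $1$-jet $\pm v$ (and hence with the family $v\pm u^k$ rather than $u\pm v$ or the $w$-type normal forms).
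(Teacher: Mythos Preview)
Your proof is correct and follows essentially the same approach as the paper: both expand $H_{\bf v}(x,0)$ to third order, identify the $A_1^\pm$ condition with non-vanishing of the $x^2$-coefficient (equivalently, $\pi_{\bf v}$ not being the osculating plane of $\Sigma$), and obtain the $A_2$ condition by substituting the osculating-plane direction into the $x^3$-coefficient to recover the numerator of $\tau_\Sigma(0)$. Your write-up is in fact more careful than the paper's---you spell out why the relevant $1$-jet is $\pm v$, verify the osculating-plane identification explicitly, and invoke the genericity bound to exclude $k\ge4$---whereas the paper simply asserts these points.
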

\begin{proof}

If $\pi_{{\bf v}}$ is transversal to the double point curve but
contains the tangent vector at the origin to $\Sigma$ and is not the
tangent cone, then the contact of $\pi_{{\bf v}}$ with $M$ is
described by the contact of the zero fibre of $g=v\pm u^k$, $k=2,3$,
with the model cuspidal cross-cap $X$. Here, the height function
along the cuspidal edge is
$$
H_{{\bf
v}}(x,0)=\left(\frac{a''(0)}{2}v_2+b_0'(0)^2v_3\right)x^2+\left(\frac{a'''(0)}{6}v_2+b_0'(0)b_0''(0)v_3\right)x^3+h.o.t.
$$
The height function has an $A_1^{\pm}$-singularity if
$$\frac{a''(0)}{2}v_2+b_0'(0)^2v_3\neq 0$$ (described by the case
$k=2$) and an $A_2$-singularity if
$$\frac{a''(0)}{2}v_2+b_0'(0)^2v_3=0 \text{    and    }
\frac{a'''(0)}{6}v_2+b_0'(0)b_0''(0)v_3\neq 0$$ (described by the
case $k=3$). Geometrically this means that the condition for the
height function to have an $A_1^{\pm}$-singularity is that $\pi_{{\bf v}}$
is not the osculating plane of $\Sigma$. The condition for the
height function to have an $A_2$-singularity is that $\pi_{{\bf v}}$
is the osculating plane and that $a'''(0)b_0'(0)-3a''(0)b_0'(0)\neq
0$, i.e. that $\tau_{\Sigma}(0)\neq 0$.
\end{proof}

The contact of the zero fibre of $g=w\pm u^2+buv+cv^2$ with the
model cuspidal cross-cap $X$ describes the contact of $\pi_{{\bf v}}$ with $M$ when $\pi_{{\bf v}}$ is the tangent cone (but is not
the osculating plane of the cuspidal edge i.e. $\kappa_{\nu}(0)\neq
0$).

For $\phi$ as in \eqref{eq:folding} and ${\bf v}=(0,0,1)$, the height function is
$$
 H_{{\bf v}}(x,y)
=(b_0(x)+b_1(x)y^2+b_2(x)y^3+b_3(x,y)y^4)^2v_3,
$$
which does not yield a finitely determined singularity.

\begin{rem}\label{rema3}
i) Notice that composing $g=w\pm u^2+buv+cv^2$ with the
parametrisation of the model cuspidal cross-cap we get $xy^3\pm
x^2+bxy^2+cy^4$, and since $c\neq 0,\frac{b^2}{4}$ this is an
$A_3^{\pm}$-singularity. So generically there could be an
$A_3^{\pm}$-singularity when $\pi_{{\bf v}}$ is the tangent cone.
One of the reasons for this not to be captured by our approach is
that when we obtain the cuspidal cross-cap as a folding of the
cuspidal edge, all the surface is left on one side of the tangent
cone, whereas the case when the surface is in both sides of the
tangent cone is also generic.


ii) In Theorem 2.11 in \cite{martinsnuno} some conditions for the height function on a corank 1 singular surface to have a corank 2 singularity are given. For the cuspidal cross-cap, those conditions are equivalent to the fact of the tangent cone coinciding with the osculating plane of the cuspidal edge curve. This is not generic, which explains why, generically, the height function on the cuspidal cross-cap only has $A_k$-singularities.
\end{rem}

\subsection{Geometry of functions on the cuspidal cross-cap and
duals}

From the point of view of the geometry of the submersion on the
cuspidal cross-cap and their $\mathcal R^+(X)$-versal deformations,
there are some interesting discriminants that can be studied. Let
$g:\mathbb R^3,0\rightarrow\mathbb R,0$ be a submersion and
$G:\mathbb R^3\times\mathbb R^2\rightarrow\mathbb R$ its
deformation. Now let $F(x,y,a)=G\circ\phi(x,y)=G(x,y^2,xy^3,a)$,
$H(y,a)=F(0,y,a)=G(0,y^2,0,a)$ and $P(x,a)=F(x,0,a)=G(x,0,0,a)$. We
define
$$
\mathscr D_{PD}(G)=\{(a,F(x,y,a))\in\mathbb R^2\times\mathbb
R:\frac{\partial F}{\partial x}=\frac{\partial F}{\partial y}=0
 \text{   at some point   } (x,y,a)\},
$$
$$
\mathscr D_{DPC}(G)=\{(a,H(y,a))\in\mathbb R^2\times\mathbb
R:\frac{\partial H}{\partial y}=0
 \text{   at some point   } (y,a)\},
$$
$$
\mathscr D_{CE}(G)=\{(a,P(x,a))\in\mathbb R^2\times\mathbb
R:\frac{\partial P}{\partial x}=0
 \text{   at some point   } (x,a)\}.
$$

In the above notation $PD$ stands for ``proper dual", $DPC$ for ``double point curve" and $CE$ for ``cuspidal edge". It is not difficult to show that for  two $P$-$\mathcal
R^+(X)$-equivalent deformations $G_1$ and $G_2$ the sets $\mathscr
D_{PD}(G_1)$, $\mathscr D_{DPC}(G_1)$ and $\mathscr
D_{CE}(G_1)$ are diffeomorphic to $\mathscr D_{PD}(G_2)$, $\mathscr D_{DPC}(G_2)$ and $\mathscr D_{CE}(G_2)$, respectively. Therefore, it is enough to
compute the sets $\mathscr D_{PD}(G)$, $\mathscr D_{DPC}(G)$ and
$\mathscr D_{CE}(G)$ for the deformations in Table
\ref{tab:germsubm}.

\medskip
$\bullet$ {\it The germ $g=u\pm v$.}

In this case $\mathscr D_{PD}(G)=\mathscr D_{CE}(G)=\emptyset$ and
$\mathscr D_{DPC}(G)$ is the plane $(a_1,a_2,0)$.

\medskip
$\bullet$ {\it The germs $g=u\pm v^k$, $k=2,3$.}

Here $G(u,v,w,a)=u\pm v^k+a_1v+\ldots+a_{k-1}v^{k-1}$ and
$F(x,y,a)=x\pm y^{2k}+a_1y^2+\ldots a_{k-1}y^{2k-2}$, so $\mathscr
D_{PD}(G)=\mathscr D_{CE}(G)=\emptyset$. On the other hand,
$H(y,a)=\pm y^{2k}+a_1y^2+\ldots a_{k-1}y^{2k-2}$ is the unfolding
of a $B_k$ singularity.

For $k=2$, we have $\frac{\partial H}{\partial y}=4y^3+2a_1y=0$ so
$\mathscr D_{DPC}(G)$ has two components parametrised by
$$(a_1,a_2,0) \text{    and    } (-2y^2,a_2,-y^4).$$
See Figure \ref{uvk} left, where the $a_2$ parameter is not shown.

For $k=3$, we have $\frac{\partial H}{\partial
y}=6y^5+2a_1y+4a_2y^3=0$ so $\mathscr D_{DPC}(G)$ has two components
parametrised by
$$(a_1,a_2,0) \text{    and    } (-3y^4-2a_2y,a_2,-2y^6-a_2y^4).$$
See Figure \ref{uvk} right.

\begin{figure}
\begin{center}
\includegraphics[width=.9\linewidth]{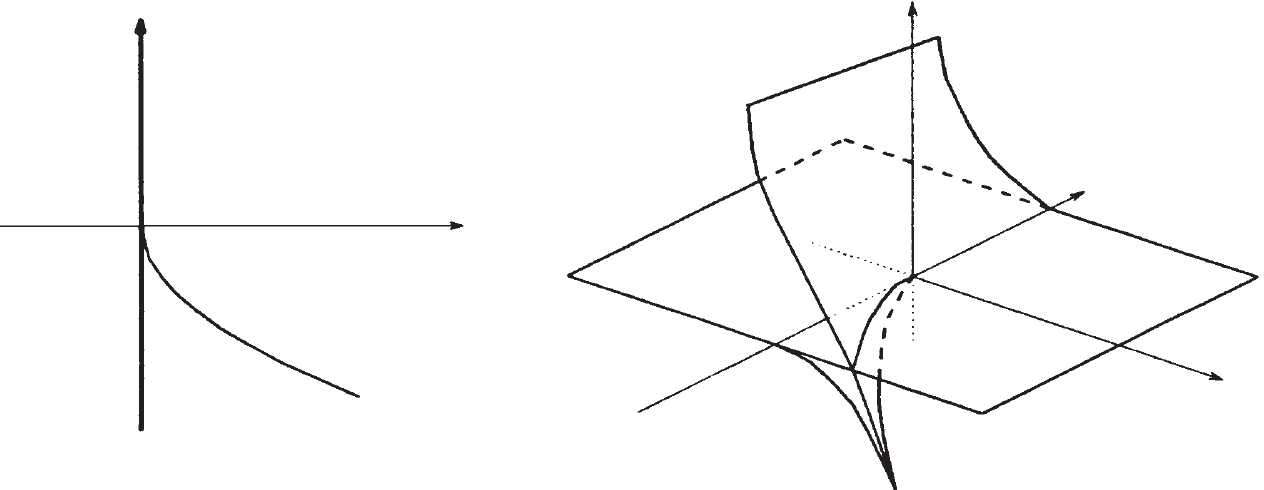}
\caption{These figures correspond to Figures 2a) and 2b) in
\cite{brucewest}.} \label{uvk}
\end{center}
\end{figure}

\medskip
$\bullet$ {\it The germs $g=v\pm u^k$, $k=2,3$.}

Here $G(u,v,w,a)=v\pm u^k+a_1u+\ldots+a_{k-1}u^{k-1}$ and
$F(x,y,a)=y^2\pm x^{k}+a_1x+\ldots a_{k-1}x^{k-1}$, which is the
deformation of an $A_{k-1}$-singularity. $H(y,a)=y^2$ so $\mathscr
D_{DPC}(G)$ is the plane $(a_1,a_2,0)$. Here $\mathscr D_{PD}(G)$
and $\mathscr D_{CE}(G)$ coincide.

For $k=2$, $\frac{\partial F}{\partial x}=\pm 2x+a_1=0$ and
$\frac{\partial F}{\partial y}=2y=0$, so $\mathscr D_{PD}(G)$ is
parametrised by $$(\mp 2x,a_2,\pm x^2).$$ See Figure \ref{vuk} left,
where the parameter $a_2$ is not shown.

For $k=3$, $\frac{\partial F}{\partial x}=\pm 3x^2+a_1+2a_2x=0$ and
$\frac{\partial F}{\partial y}=2y=0$, so $\mathscr D_{PD}(G)$ is a
cuspidal edge parametrised by $$(\mp 3x^2-2a_2x,a_2,\pm
2x^3-a_2x^2).$$ See Figure \ref{vuk} right.

\begin{figure}
\begin{center}
\includegraphics[width=.9\linewidth]{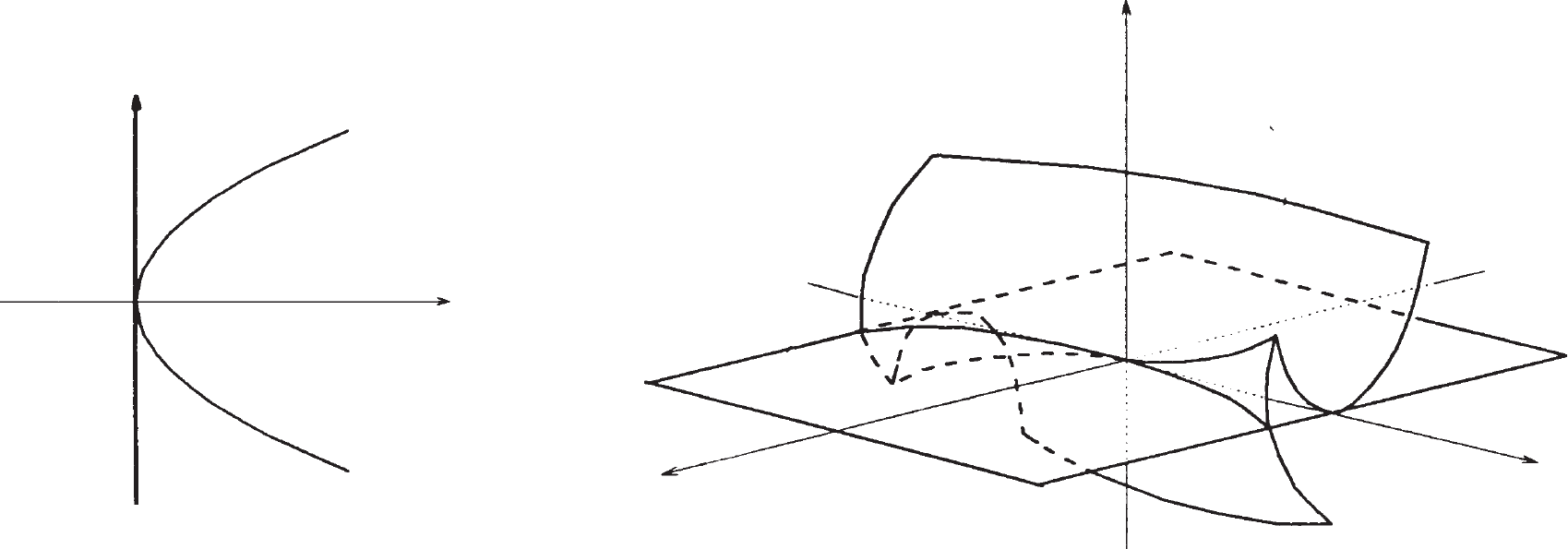}
\caption{These figures correspond to Figures 3 and 4 in
\cite{brucewest}.} \label{vuk}
\end{center}
\end{figure}

\medskip
$\bullet$ {\it The  germ $g=w+u^2+buv+cv^2$.}

Here $G(u,v,w,a)=w+u^2+buv+cv^2+a_1u+a_2v$ and
$F(x,y,a)=xy^3+x^2+bxy^2+cy^4+a_1x+a_2y^2$. $\frac{\partial
F}{\partial x}=y^3+2x+by^2+a_1=0$ and $\frac{\partial F}{\partial
y}=3xy^2+2bxy+4cy^3+2a_2y=0$ which yields two solutions
$$
\left\{
\begin{array}{l}
y=0\\
a_1=-2x
\end{array}
\right.
\quad\text{and}\quad
\left\{
\begin{array}{l}
a_1=-y^3-2x-by^2\\
a_2=-\dfrac{3}{2}xy-bx-2cy^2,
\end{array}
\right.
$$
so $\mathscr D_{PD}(G)$ has two components
parametrised by
$$(-y^3-2x-by^2,-bx-3/2xy-2cy^2,-3/2xy^3-x^2-bxy^2-cy^4),$$ which
is a cuspidal cross-cap, and $$(-2x,a_2,-x^2),$$ which is a
hyperplane containing the cuspidal edge of the first component. The
second component coincides with $\mathscr D_{CE}(G)$, which, in this
case is contained in $\mathscr D_{PD}(G)$ (see Figure \ref{duallast}).

\begin{figure}
\begin{center}
\includegraphics[width=0.5\linewidth]{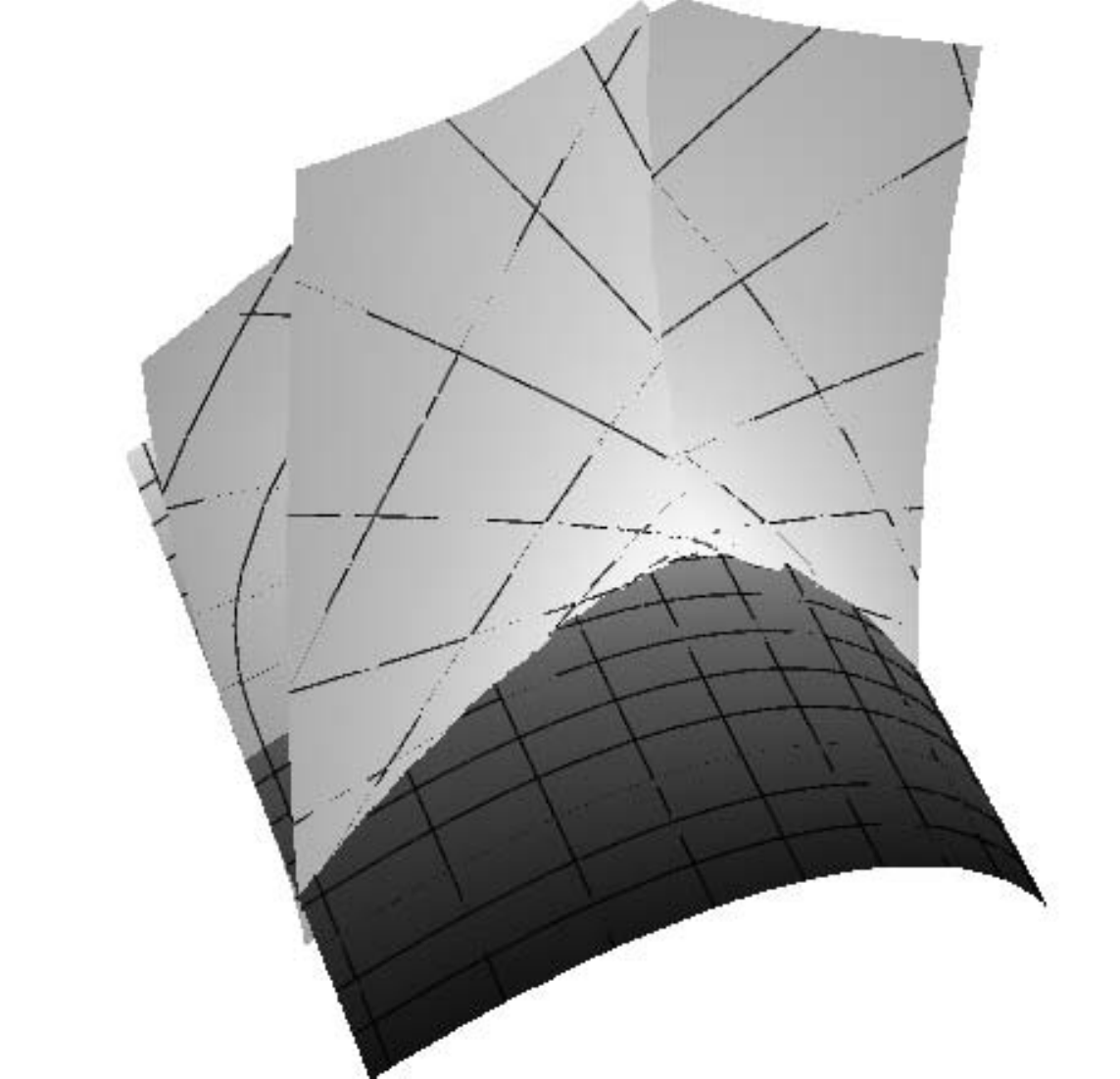}
\caption{The discriminant $\mathscr D_{PD}(G)$. The dark part
corresponds to $\mathscr D_{CE}(G)$.} \label{duallast}
\end{center}
\end{figure}

On the other hand $H(y,a)=cy^4+a_2y^2$, so $\frac{\partial
H}{\partial y}=4cy^3+2a_2y=0$, so $\mathscr D_{DPC}(G)$ consists of
two components, the plane $(a_1,a_2,0)$ and another one parametrised
by $$(a_1,-2cy^2,-cy^4).$$

\bigskip
We can use these discriminants to study the dual of the
cuspidal cross-cap. Consider again the height function $H_{{\bf
v}}(x,y))=\phi(x,y)\cdot \bf v$. We have the sets
$$
\mathscr D_{PD}(H)=\{({{\bf v}},H_{{\bf v}}(x,y))\in  S^2 \times
\mathbb R: \frac{\partial H_{{\bf v}}}{\partial x}=\frac{\partial
H_{{\bf v}}}{\partial y}=0 \mbox{ at } (x,y,{{\bf v}}) \},
$$
$$
\mathscr D_{DPC}(H)=\{({{\bf v}},H_{{\bf v}}(0,y))\in S^2\times
\mathbb R: \frac{\partial H_{{\bf v}}}{\partial y}=0 \mbox{ at  }
(0,y,{{\bf v}}) \}
$$
and
$$
\mathscr D_{CE}(H)=\{({{\bf v}},H_{{\bf v}}(x,0))\in S^2\times
\mathbb R: \frac{\partial H_{{\bf v}}}{\partial x}=0 \mbox{ at  }
(x,0,{{\bf v}}) \}.
$$

If $\pi_{\bf v}$ is a member of the pencil containing the tangential
direction of $M$ but is not the tangent cone to $M$, then the set
$\mathscr D_{PD}(H)$ coincides with $\mathscr D_{CE}(H)$ and
describes locally the dual of the curve $\Sigma$. When $\pi_{\bf v}$
is the tangent cone to $M$, then the set $\mathscr D_{PD}(H)$
consists of two components. One of them is $\mathscr D_{CE}(H)$ (the
dual of $\Sigma$) and the other is the {\it proper dual} of $M$
which is the surface consisting of the tangent planes to $M$ away
from points on $\Sigma$ together with their limits at points on
$\Sigma$, i.e., the tangent cones at points on $\Sigma$. $\mathscr
D_{DPC}(H)$ is the dual of the double point curve. If $\pi_{\bf v}$
is transversal to the limiting tangent vector of the double point
curve, then $\mathscr D_{DPC}(H)$ consists of just one plane
component. If $\pi_{\bf v}$ contains the limiting tangent vector of
the double point curve, then $\mathscr D_{DPC}(H)$ consists of two
components one of which is a plane.

If the contact of $M$ with  $\pi_{{\bf v}}$  is described by that of
the fibre $g=0$ with the model cuspidal edge $X$, with $g$ as in
Theorem \ref{theo:Classification}, then $\mathscr D_{PD}(H)$ (resp.
$\mathscr D_{DPC}(H)$ and $\mathscr D_{CE}(H)$) is diffeomorphic to
$\mathscr D_{PD}(G)$ (resp. $\mathscr D_{DPC}(G)$ and $\mathscr
D_{CE}(G)$), where $G$ is an $\mathcal R^+(X)$-versal deformation of
$g$ with 2-parameters.

\begin{prop}
The previous calculations give the generic models, up to
diffeomorphisms, of $\mathscr D_{PD}(H)$, $\mathscr D_{DPC}(H)$ and
$\mathscr D_{CE}(H)$, and these are as in Figures \ref{uvk},
\ref{vuk} and \ref{duallast}.
\end{prop}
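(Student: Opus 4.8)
The plan is to deduce the result from the $\mathcal R(X)$-classification of submersions (Theorem~\ref{theo:Classification}), the genericity of the contact of $M$ with planes, and the fact -- recorded before the statement -- that $P$-$\mathcal R^+(X)$-equivalent deformations have diffeomorphic discriminants of each of the three types $\mathscr D_{PD}$, $\mathscr D_{DPC}$, $\mathscr D_{CE}$. Concretely, for a generic cuspidal cross-cap I would show that, near every direction ${\bf v}_0$ at which the contact of $\pi_{{\bf v}_0}$ with $M$ is degenerate, the family of height functions $H$ is $P$-$\mathcal R^+(X)$-equivalent to one of the two-parameter $\mathcal R^+(X)$-versal deformations $G$ listed in Table~\ref{tab:germsubm}. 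Then $\mathscr D_{PD}(H)$, $\mathscr D_{DPC}(H)$, $\mathscr D_{CE}(H)$ are, near ${\bf v}_0$, diffeomorphic to the sets $\mathscr D_{PD}(G)$, $\mathscr D_{DPC}(G)$, $\mathscr D_{CE}(G)$ already computed in this subsection, i.e.\ to the pictures of Figures~\ref{uvk}, \ref{vuk} and \ref{duallast}.

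First I would record which $\mathcal R(X)$-types actually occur. The contact of the plane $\pi_{{\bf v}}$ with $X$ is measured by the $\mathcal R(X)$-class of the linear submersion $g_{{\bf v}}(u,v,w)=\langle(u,v,w),{\bf v}\rangle$; by the transversality theorem in the Appendix of \cite{brucewest}, for a residual set of cuspidal cross-caps this class has $\mathcal R(X)$-codimension at most $2$ (of the stratum), hence is one of the six germs of Theorem~\ref{theo:Classification}. The geometric meaning of each type is exactly the one spelled out in Subsection~\ref{sec:geomint} (whether $\pi_{{\bf v}}$ is the tangent cone, an osculating plane of $\Sigma$ or of $\widehat d$, or contains the limiting tangent of $\widehat d$), so for a generic $\phi$ each of the six cases is realised on the corresponding stratum of directions in $S^2$.

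The crux is to verify that $H$ is an $\mathcal R^+(X)$-versal deformation of each of its degenerate members. After bringing $g_{{\bf v}_0}$ to the normal form $f$ of Table~\ref{tab:germsubm} by an $X$-preserving diffeomorphism $k$, the initial speeds of the family become $\dot G_i=\langle k(u,v,w),{\bf w}_i\rangle$, with ${\bf w}_1,{\bf w}_2$ a basis of $T_{{\bf v}_0}S^2$; these are in general non-linear functions of $(u,v,w)$ since $k$ is non-linear. By Proposition~\ref{theo:InfcondUnfFunc} one must check
$$L\mathcal R_e(X)\cdot f+\mathbb R\cdot\{1,\dot G_1,\dot G_2\}=\mathcal E_3,$$
equivalently that $\dot G_1,\dot G_2$ project onto the two-dimensional normal space of $f$ (the normal space of the \emph{stratum} in the modal case $w\pm u^2+buv+cv^2$, where $b,c$ are determined by $\phi$). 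Expanding $\dot G_i$ in terms of $k$ and of the $2$- and $3$-jets of $\phi$ turns this into an explicit transversality condition on the geometric invariants of Section~2; it holds for a residual set of cuspidal cross-caps, which is what we mean by \emph{generic} here.

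Granting versality, the uniqueness of versal deformations with a fixed number of parameters (cf.\ \cite{brucewest}) shows that the restriction of $H$ to a two-dimensional transversal to the degenerate stratum in $S^2$ is $P$-$\mathcal R^+(X)$-equivalent to the corresponding $G$ in Table~\ref{tab:germsubm}; since $\mathscr D_{PD}$, $\mathscr D_{DPC}$ and $\mathscr D_{CE}$ are preserved by $P$-$\mathcal R^+(X)$-equivalence, the local models of $\mathscr D_{PD}(H)$, $\mathscr D_{DPC}(H)$, $\mathscr D_{CE}(H)$ are exactly the ones drawn in Figures~\ref{uvk}, \ref{vuk}, \ref{duallast}. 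The main obstacle is the versality step, and within it the modal case $w\pm u^2+buv+cv^2$: one has to check versality \emph{along the stratum} with $b,c$ varying with $\phi$, and to keep track of the fact, noted in Remark~\ref{rema3}, that this last case does not arise when $M$ is obtained by folding a cuspidal edge, so that there its discriminant is the generic model only within the full class of cuspidal cross-caps.
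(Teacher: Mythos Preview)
Your proposal is correct and follows essentially the same approach as the paper: the proposition is stated there without a separate proof, being treated as a direct consequence of the preceding discussion --- the transversality theorem from \cite{brucewest} guaranteeing that only $\mathcal R(X)$-codimension $\le 2$ singularities occur generically, the remark that $P$-$\mathcal R^+(X)$-equivalent deformations have diffeomorphic discriminants $\mathscr D_{PD}$, $\mathscr D_{DPC}$, $\mathscr D_{CE}$, and the explicit computations of these discriminants for each $G$ in Table~\ref{tab:germsubm}. Your write-up makes the versality step (and in particular the modal case) more explicit than the paper does, but the underlying argument is the same.
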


\section{The tangent developable surface of a space curve at a point of zero torsion}

In \cite{cleave}, Cleave proved that the tangent developable surface
of a space curve at a point of zero torsion has a cuspidal cross-cap
singularity. In this section we study the geometry of such a cuspidal cross-cap.

Let $\gamma:I\to \R^3$ be a space curve, and
$\gamma=(\gamma_1,\gamma_2,\gamma_3)$. Without loss of generality we
assume that $\gamma$ is transverse to the plane $X=0$, namely,
$\gamma_1'(0)\ne0$. Then there exists a parameter $u$ such that
$\gamma(u)=(u,\gamma_2(u),\gamma_3(u))$. Let $f$ be the tangent
developable surface of $\gamma$:
$$
f(u,v) = (u+v,\gamma_2(u)+v\gamma_2'(u),\gamma_3(u)+v\gamma_3'(u)).
$$

Thus, if $\det(\tilde\gamma'(0),\tilde\gamma''(0))\ne0$, then $0$ is
a singularity of the first kind, where
$\tilde\gamma=(\gamma_2,\gamma_3)$. In what follows, we assume
$\det(\tilde\gamma'(0),\tilde\gamma''(0))\ne0$. We have
$$
\lambda=v,\quad \eta=\partial_u-\partial_v.
$$

Let $\kappa_s^f,\kappa_\nu^f,\kappa_t^f,\kappa_c^f$ be the
invariants $\kappa_s, \kappa_\nu, \kappa_t$ and $\kappa_c$, of $f$.

Then we have
\begin{align*}
\kappa_s^f &= -\dfrac{ \sqrt{(1+(\gamma_2')^2) (\gamma_3'')^2
       -2 \gamma_3' \gamma_2' \gamma_3'' \gamma_2''
       +(1+(\gamma_3')^2) (\gamma_2'')^2}
}
{(1+(\gamma_3')^2+(\gamma_2')^2)^{3/2}}=\kappa\\
\kappa_\nu^f&=
0\\
\kappa_t^f&= \dfrac{-\gamma_2'' \gamma_3'''+\gamma_3'' \gamma_2'''}
{(\gamma_3'')^2+(\gamma_2'')^2
+(\gamma_2' \gamma_3''-\gamma_3' \gamma_2'')^2}=\tau\\
\kappa_c^f&= \dfrac{2 (1+(\gamma_3')^2+(\gamma_2')^2)^{3/2}
(-\gamma_2'' \gamma_3'''+\gamma_3'' \gamma_2''')}
{((\gamma_3'')^2+(\gamma_2'')^2 +(\gamma_2' \gamma_3''-\gamma_3'
\gamma_2'')^2)^{5/2}}.
\end{align*}
where $\kappa$ and $\tau$ are the curvature and the torsion of the
space curve $f(u,0)=\gamma(u)$. If $\tau(0)=0$, then $f(u,v)$ has a
cuspidal cross-cap singularity at the origin and $\kappa_\nu(0)=
\kappa_t(0)=\kappa_c(0)=0$. Therefore, from the point of view of its
invariants, this cuspidal cross-cap is not generic,
since there are cuspidal cross-caps with
$\kappa_\nu\ne0$ and $\kappa_t\ne 0$.

However, we can consider the contact of this cuspidal cross-cap with
planes. The tangent cone of $f$ contains $\gamma'(0)=(1,0,0)$.
Consider the contact of $f$ with its tangent cone, that is, take a
direction ${\bf v}=(0,v_2,v_3)$ and consider the height function
$$
 H_{{\bf v}}(x,y)
=(\gamma_2(u)+v\gamma_2'(u))v_2+(\gamma_3(u)+v\gamma_3'(u))v_3.
$$
Notice that $H$ can never have an $A_3$-singularity, which is the
only generic singularity of the height function which cannot be
recovered by folding a cuspidal edge (see Remark \ref{rema3} i)). This suggests that any
cuspidal cross-cap obtained from the tangent developable of a space
curve at a point of zero torsion, which is generic in the sense of
its contact with planes, can be obtained by folding a cuspidal edge.

\noindent
ROS: Departament de Matem\`{a}tiques, Universitat de Val\`{e}ncia, c/ Dr Moliner no 50, 46100, Burjassot, Val\`{e}ncia, Spain.\\
Email: raul.oset@uv.es\\

\noindent KS: Department of Mathematics,
Kobe University, Rokko 1-1, Nada,
Kobe 657-8501, Japan\\
Email: saji@math.kobe-u.ac.jp
\end{document}